\newcommand\blfootnote[1]{%
  \begingroup
  \renewcommand\thefootnote{}\footnote{#1}%
  \addtocounter{footnote}{-1}%
  \endgroup
}
\newenvironment{rcases}
  {\left.\begin{aligned}}
  {\end{aligned}\right\rbrace}
\newtheorem{theorem}{Theorem}[section]
\numberwithin{equation}{section}
\newtheorem{definition}[theorem]{Definition}
\newtheorem{corollary}[theorem]{Corollary} 
\newtheorem{lemma}[theorem]{Lemma} 
\newtheorem{proposition}[theorem]{Proposition}
\newtheorem{remark}[theorem]{Remark}
\theoremstyle{definition}
\newtheorem{example}[theorem]{Example}
\begin{document}

\title{Ces\`{a}ro summability of integrals of fuzzy-number-valued functions}
\author{Enes Yavuz\footnote{Corresponding author. E-mail: enes.yavuz@cbu.edu.tr }\ \ , \"{O}zer Talo and  H\"{u}samettin \c{C}o\c{s}kun }

\date{{\small Department of Mathematics, Manisa Celal Bayar University, Manisa, Turkey.\\ E-mails: enes.yavuz@cbu.edu.tr; ozer.talo@cbu.edu.tr; husamettin.coskun@cbu.edu.tr }}

\maketitle
\thispagestyle{titlepage}
\blfootnote{\emph{Key words and phrases:} Ces\`{a}ro summability, Tauberian theorems, improper integrals, fuzzy-number-valued function\\ \rule{0.63cm}{0cm}\emph{Mathematics Subject Classification:} 40G05, 40E05, 40A10, 03E72}

\begin{abstract}
In the present study, we have introduced Ces\`{a}ro summability of integrals of fuzzy-number-valued functions and given one-sided Tauberian conditions under which convergence of improper fuzzy Riemann integrals follows from Ces\`{a}ro summability. Also, fuzzy analogues of Schmidt type slow decrease and Landau type one-sided Tauberian conditions have been obtained.
\end{abstract}

\section{Introduction}
Given a locally integrable function $f:[0,\infty)\rightarrow \mathbb{C}$, {\it the Ces\`{a}ro operator Cf} is defined by
$$(Cf)(x):=\frac{1}{x}\int_0^{x}f(t)dt, \qquad x\in(0,\infty).$$
In classical analysis, the Ces\`{a}ro operator was investigated from various aspects and a large number of results have appeared recently\cite{operator1,operator2,operator3,operator4,operator5}. Titchmarsh\cite{titchmarsh} also used the operator as a convergence method for divergent integrals and introduced the Ces\`{a}ro summability of integrals\cite[p.11]{hardy}. Following this introduction, the concept of Ces\`{a}ro summability of integrals received considerable attention and Tauberian conditions under which Ces\`{a}ro summable improper integrals converge have been investigated\cite{hardy,TT,moric1,new1,new2,new3,new4,new5,new6}. Also, there are studies applying the concept to Fourier integrals\cite{titchmarsh,fourier1,fourier2,fourier3,fourier4}.

In the light of the developments mentioned above, establishment of the concept of Ces\`{a}ro summability of integrals for fuzzy analysis is also of
 importance for handling divergent integrals of fuzzy-number-valued functions. The concept of integration of fuzzy-number-valued functions has already been  introduced by  Dubois et al. \cite{fek1} and  studied by many mathematicians\cite{fuzzy1,an,fuzzy2,fuzzy3}. Also, in particular, Bede and Gal\cite{bede} have proved that there exists a mean value, or a Ces\`{a}ro sum,  for any almost periodic fuzzy-number-valued function and given some applications of these functions to fuzzy differential equations and to fuzzy dynamical systems. At this point, approaching the concept of \lq mean value\rq\ from perspective of summability theory, we define Ces\`{a}ro summability of integrals of fuzzy-number-valued functions and give various types of convergence conditions  for Ces\`{a}ro summable improper integrals of fuzzy-number-valued functions.
\section{ Preliminaries}

A \textit{fuzzy number} is a fuzzy set on the real axis, i.e. u is normal, fuzzy convex, upper semi-continuous and $\operatorname{supp}u =\overline{\{t\in\mathbb{R}:u(t)>0\}}$ is compact \cite{zadeh}.
We denote the space of fuzzy numbers by $E^1$. \textit{$\alpha$-level set} $[u]_{\alpha}$ of $u\in E^1$
is defined by
\begin{eqnarray*}
[u]_{\alpha}:=\left\{\begin{array}{ccc}
\{t\in\mathbb{R}:u(t)\geq\alpha\} & , & \qquad if \quad 0<\alpha\leq 1, \\[6 pt]
\overline{\{t\in\mathbb{R}:u(t)>\alpha\}} & , &
if \quad \alpha=0.\end{array} \right.
\end{eqnarray*}
Each $r\in\mathbb{R}$ can be regarded as a fuzzy number $\overline{r}$ defined by
\begin{eqnarray*}
\overline{r}(t):=\left\{\begin{array}{ccc}
1 & , & if \quad t=r, \\
0 & , & if \quad t\neq r.\end{array} \right.
\end{eqnarray*}
Let $u,v\in E^1$ and $k\in\mathbb{R}$. The addition and scalar multiplication are defined by
\begin{eqnarray*}
[u+v]_{\alpha}=[u]_{\alpha} +[v]_{\alpha}=[u^-_{\alpha}+v^-_{\alpha}, u^+_{\alpha}+v^+_{\alpha}], [k u]_{\alpha}=k[u]_{\alpha}
\end{eqnarray*}
where $[u]_{\alpha}=[u^-_{\alpha}, u^+_{\alpha}]$, for all $\alpha\in[0,1]$.

\begin{lemma}\label{added}\cite{bede} The following statements hold:
\begin{itemize}
\item[(i)] $\overline{0} \in E^{1}$ is neutral element with respect to $+$, i.e., $u+\overline{0}=\overline{0}+u=u$ for all $u \in E^{1}$.
\item [(ii)] With respect to $\overline{0}$, none of $u \neq \overline{r}$, $r\in \mathbb{R}$
has opposite in $E^{1}.$
\item[(iii)] For any $a,b \in\mathbb{R}$ with $a, b \geq 0$ or $a,b \leq 0$ and any $u\in E^{1}$, we have
$(a + b) u = au + bu$. For general $a, b \in \mathbb{R}$, the above property does not hold.
\item[(iv)] For any $a \in \mathbb{R}$ and any $u, v \in E^{1}$, we have
$a (u+ v) = au + av.$
\item[(v)] For any $a, b \in \mathbb{R}$ and any $u \in E^{1}$, we have
$a (b u) = (a b) u.$
\end{itemize}
\end{lemma}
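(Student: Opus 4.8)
The plan is to reduce every assertion to elementary interval arithmetic by invoking the level-set characterization: two fuzzy numbers coincide if and only if their $\alpha$-level sets agree for every $\alpha\in[0,1]$. Writing $[u]_\alpha=[u^-_\alpha,u^+_\alpha]$ and recalling that $[\overline{r}]_\alpha=[r,r]$, I would treat each item by computing both sides level-wise and comparing the resulting compact intervals, keeping in mind that multiplication by a negative scalar reverses the endpoints, i.e. $k[u]_\alpha=[ku^+_\alpha,ku^-_\alpha]$ when $k<0$.

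Items (i), (iv) and (v), together with the same-sign case of (iii), are routine in this scheme. For (i), since $[\overline{0}]_\alpha=\{0\}$ the defining formula gives $[u+\overline{0}]_\alpha=[u^-_\alpha,u^+_\alpha]=[u]_\alpha$, and commutativity of interval addition yields $\overline{0}+u=u$ as well. For (v) I would expand $a(bu)$ level-wise and check that $a\bigl(b[u]_\alpha\bigr)=(ab)[u]_\alpha$ holds in all four sign combinations of $a,b$. For (iv) the only nontrivial point is the distributivity of a negative scalar over an interval sum, which follows after one endpoint-reversal; for the same-sign part of (iii), expanding $au+bu$ and $(a+b)u$ level-wise produces the same interval, with the order of its endpoints determined by the common sign of $a$ and $b$.

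The genuinely substantive points are (ii) and the failure of (iii) for general $a,b$, and both rest on a single observation about the length of a sum of intervals. For (ii) I would suppose $u$ had an opposite $v$, so that $[u]_\alpha+[v]_\alpha=\{0\}$ for every $\alpha$; since the length of a sum of intervals is the sum of the lengths, $(u^+_\alpha-u^-_\alpha)+(v^+_\alpha-v^-_\alpha)=0$ forces $u^-_\alpha=u^+_\alpha$ for all $\alpha$. Because the level sets are nested and $u$ is normal, the point of $[u]_1$ lies in every singleton $[u]_\alpha$, whence $[u]_\alpha=\{r\}$ for a fixed $r$ and $u=\overline{r}$; this is exactly the contrapositive of (ii). The same length identity drives the counterexample for (iii): taking $a=1$, $b=-1$ and any non-crisp $u$, one computes $[u+(-1)u]_\alpha=[u^-_\alpha-u^+_\alpha,\,u^+_\alpha-u^-_\alpha]$, a nondegenerate interval, whereas $[(a+b)u]_\alpha=[\overline{0}]_\alpha=\{0\}$, so the two sides differ.

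I expect the main obstacle to be organizing the sign bookkeeping cleanly rather than any deep difficulty: the endpoint-reversal under negative scalars must be tracked consistently across (iii)--(v), and for (ii) the reduction to the principle that a summed interval is degenerate only if each summand is degenerate is the conceptual crux that makes the non-existence of opposites transparent.
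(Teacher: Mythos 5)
Your proof is correct, but note that the paper itself offers no argument for this lemma at all: it is imported verbatim from Bede and Gal \cite{bede} as a preliminary, so there is no in-paper proof to compare against. Your level-set reduction is the natural way to supply the missing details, and it checks out. Items (i), (iv), (v) and the same-sign case of (iii) are, as you say, direct endpoint computations (for (iv) and (v) you can even avoid sign bookkeeping entirely, since $a(A+B)=aA+aB$ and $a(bA)=(ab)A$ hold as identities of sets for arbitrary real scalars; the case analysis is only genuinely needed in (iii), where $aI+bI=(a+b)I$ uses convexity of $I$ together with $a,b$ having the same sign). The real content is exactly where you locate it: the additivity of interval length under Minkowski sum. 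That single observation yields (ii) --- if $u+v=\overline{0}$ then $(u^+_\alpha-u^-_\alpha)+(v^+_\alpha-v^-_\alpha)=0$ forces every $[u]_\alpha$ to be a singleton, and nestedness plus normality pins all these singletons to one point $r$, giving $u=\overline{r}$ --- and it also produces the counterexample $u+(-1)u\neq\overline{0}$ for non-crisp $u$, settling the negative half of (iii). What your write-up buys over the paper's citation is self-containedness and an explicit identification of the one structural fact (length additivity) on which the two nontrivial assertions rest; what the citation buys the authors is brevity in a preliminaries section whose results are standard in the fuzzy-numbers literature.
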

The metric $D$ on $E^1$ is defined as
\begin{eqnarray*}
 D(u,v):=\sup_{\alpha\in[0,1]}d([u]_{\alpha},[v]_{\alpha}):=
\sup_{\alpha\in[0,1]}\max\{|u^-_{\alpha}-v^-_{\alpha}|,|u^+_{\alpha}-
v^+_{\alpha}|\}.
\end{eqnarray*}
where  $d$ is the Hausdorff metric.
\begin{proposition}\cite{bede}
\label{p02} Let $u,v,w,z\in E^1$ and $k\in\mathbb{R}$. Then,
\begin{itemize}
\item [(i)] $(E^1,D)$ is a complete metric space.
\item [(ii)] $D(ku,kv)=|k|D(u,v)$.
\item [(iii)] $D(u+v,w+v)=D(u,w)$.
\item [(iv)] $D(u+v,w+z)\leq D(u,w)+D(v,z)$.
\item [(v)] $|D(u,\overline{0})-D(v,\overline{0})|\leq D(u,v)\leq
D(u,\overline{0})+D(v,\overline{0})$.
\end{itemize}
\end{proposition}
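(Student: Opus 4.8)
The plan is to treat (i) as the substantial claim and to obtain (ii)--(v) either from direct computations at each level $\alpha$ or, for (iv) and (v), as formal consequences of the metric axioms secured in (i). First I would verify that $D$ is genuinely a metric on $E^1$. Nonnegativity and symmetry are immediate from the definition, and $D(u,v)=0$ forces $u^-_\alpha=v^-_\alpha$ and $u^+_\alpha=v^+_\alpha$ for every $\alpha\in[0,1]$, hence $[u]_\alpha=[v]_\alpha$ for all $\alpha$; by the characterization of a fuzzy number through its family of level sets, this yields $u=v$. The triangle inequality is inherited level-wise from the absolute value, since for each $\alpha$ one has $d([u]_\alpha,[w]_\alpha)\le d([u]_\alpha,[v]_\alpha)+d([v]_\alpha,[w]_\alpha)$, and taking the supremum over $\alpha$ preserves the inequality.

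For (ii)--(v) I would argue level-wise. Using $[ku]_\alpha=k[u]_\alpha$, the endpoints satisfy $(ku)^\mp_\alpha=ku^\mp_\alpha$ when $k\ge 0$, while the two endpoints exchange roles when $k<0$; in either case $\max\{|ku^-_\alpha-kv^-_\alpha|,|ku^+_\alpha-kv^+_\alpha|\}=|k|\,d([u]_\alpha,[v]_\alpha)$, and the supremum over $\alpha$ gives (ii). For (iii), the common summand $v$ cancels inside each absolute value, so that $d([u+v]_\alpha,[w+v]_\alpha)=d([u]_\alpha,[w]_\alpha)$ for every $\alpha$, and the supremum yields the claim. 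Statements (iv) and (v) then follow formally: $D(u+v,w+z)\le D(u+v,w+v)+D(w+v,w+z)=D(u,w)+D(v,z)$ by the triangle inequality together with (iii), while (v) is just the forward and reverse triangle inequalities applied with the third point $\overline{0}$, using $D(\overline{0},v)=D(v,\overline{0})$.

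The main obstacle is completeness. Given a Cauchy sequence $(u_n)$ in $(E^1,D)$, the definition of $D$ as a supremum over $\alpha$ shows that the real sequences $\big((u_n)^-_\alpha\big)$ and $\big((u_n)^+_\alpha\big)$ are Cauchy uniformly in $\alpha\in[0,1]$, hence converge uniformly to functions $\alpha\mapsto u^-_\alpha$ and $\alpha\mapsto u^+_\alpha$. The delicate point is to confirm that this limiting pair of endpoint functions is admissible, i.e. that it is the level-set data of an actual fuzzy number $u\in E^1$. Here I would invoke the standard representation/characterization theorem: a fuzzy number is recovered from a pair $(u^-_\alpha,u^+_\alpha)$ provided $u^-_\alpha$ is bounded and nondecreasing, $u^+_\alpha$ is bounded and nonincreasing, $u^-_1\le u^+_1$, with left-continuity on $(0,1]$ and right-continuity at $0$. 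Monotonicity in $\alpha$ and the ordering $u^-_1\le u^+_1$ pass to the uniform limit immediately, and the one-sided continuity is preserved because a uniform limit transmits both left- and right-continuity. Once $u$ is built this way, the uniformity of the convergence gives $D(u_n,u)=\sup_\alpha\max\{|(u_n)^-_\alpha-u^-_\alpha|,|(u_n)^+_\alpha-u^+_\alpha|\}\to 0$, so $u_n\to u$ and $(E^1,D)$ is complete.
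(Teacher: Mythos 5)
This proposition is not proved in the paper at all: it is quoted as a known result from Bede and Gal \cite{bede}, so there is no internal argument to compare yours against. Judged on its own merits, your proof is correct and is essentially the standard one from the literature. Parts (ii) and (iii) do follow from the level-wise computations you describe (with the endpoint swap for $k<0$ handled correctly), part (iv) follows formally from the triangle inequality together with (iii) via the intermediate point $w+v$, and part (v) is exactly the forward and reverse triangle inequalities anchored at $\overline{0}$, using symmetry of $D$. For completeness, the one genuinely delicate step is the one you isolate: a $D$-Cauchy sequence yields endpoint functions that are Cauchy uniformly in $\alpha$, and the uniform limit pair must be certified as the level data of an actual fuzzy number. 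Your appeal to the Goetschel--Voxman type representation theorem (bounded monotone endpoint functions, left-continuous on $(0,1]$, right-continuous at $0$, with $u^-_1\le u^+_1$) is the right tool, and your observation that one-sided continuity survives uniform limits (a routine $\varepsilon/3$ argument) is precisely what closes the gap; monotonicity and the inequality $u^-_1\le u^+_1$ pass even to pointwise limits, and $D(u_n,u)\to 0$ is literally the statement of uniform convergence of the endpoint functions. So your proposal would stand as a complete proof; the paper itself simply defers to \cite{bede} for this material.
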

Partial ordering relation on $E^1$ is defined as follows:
\begin{eqnarray*}
u\preceq v \Longleftrightarrow
[u]_{\alpha}\preceq[v]_{\alpha}\Longleftrightarrow u^-_{\alpha}\leq
v^-_{\alpha}~\text{ and }~u^+_{\alpha}\leq v^+_{\alpha}~\text{ for
all }~\alpha\in[0,1].
\end{eqnarray*}

We say a fuzzy number $u$ is negative if and only if $u(t)=0$ for all $t\geq 0$ (see \cite{negative}).\\
Combining the results of Lemma 6 in \cite{aytarfss1}, Lemma 5 in \cite{aytarinf3}, Lemma 3.4, Theorem 4.9 in \cite{Li}  and Lemma 14 in\cite{slowly}, following Lemma is obtained.
\begin{lemma}\label{epsilon}
Let $u,v,w,e\in E^1$ and $\varepsilon>0$. The following statements hold:
\begin{itemize}
  \item [(i)] $D(u,v)\leq\varepsilon$ if and only if $u-\overline{\varepsilon}\preceq v \preceq u+\overline{\varepsilon}$
  \item [(ii)] If $u\preceq v +\overline{\varepsilon}$ for every $\varepsilon>0$,  then $u\preceq v$.
  \item [(iii)] If $u\preceq v$ and $v\preceq w$, then $u\preceq w$.
   \item [(iv)] If  $u\preceq w$ and $v\preceq e$, then $u+v\preceq w+e$.
    \item [(v)] If  $u+w\preceq v+ w$ then $u\preceq v$.
\end{itemize}
\end{lemma}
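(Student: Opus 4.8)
The plan is to reduce every one of the five statements to an equivalent assertion about the \emph{endpoint functions} $\alpha\mapsto u^{-}_{\alpha}$ and $\alpha\mapsto u^{+}_{\alpha}$. This is natural because both the order $\preceq$ and the metric $D$ are, by their very definitions, built only from these endpoints, and the crisp shifts behave transparently on levels: since $\overline{\varepsilon}(t)=1$ only at $t=\varepsilon$ we get $[\overline{\varepsilon}]_{\alpha}=[\varepsilon,\varepsilon]$ for every $\alpha$, whence the level-wise addition formula gives $[u\pm\overline{\varepsilon}]_{\alpha}=[u^{-}_{\alpha}\pm\varepsilon,\,u^{+}_{\alpha}\pm\varepsilon]$. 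Once each claim is rephrased level by level, it collapses to an elementary fact about real numbers holding uniformly in $\alpha$.

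For (i) I would first record the level sets of $u-\overline{\varepsilon}$ and $u+\overline{\varepsilon}$ just noted, so that by the definition of $\preceq$ the two-sided relation $u-\overline{\varepsilon}\preceq v\preceq u+\overline{\varepsilon}$ is exactly the system $u^{-}_{\alpha}-\varepsilon\leq v^{-}_{\alpha}\leq u^{-}_{\alpha}+\varepsilon$ and $u^{+}_{\alpha}-\varepsilon\leq v^{+}_{\alpha}\leq u^{+}_{\alpha}+\varepsilon$ for every $\alpha\in[0,1]$, i.e. $|u^{-}_{\alpha}-v^{-}_{\alpha}|\leq\varepsilon$ and $|u^{+}_{\alpha}-v^{+}_{\alpha}|\leq\varepsilon$ for all $\alpha$. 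Taking the supremum over $\alpha$ turns this into $D(u,v)\leq\varepsilon$; conversely $D(u,v)\leq\varepsilon$ bounds each $|u^{\pm}_{\alpha}-v^{\pm}_{\alpha}|$ by $\varepsilon$ and so returns the inclusion. This is the only step where the supremum in $D$ must be reconciled with the pointwise-in-$\alpha$ inequalities, but because the bound $\varepsilon$ is uniform in $\alpha$ both directions are immediate.

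For (ii) the hypothesis reads $u^{\pm}_{\alpha}\leq v^{\pm}_{\alpha}+\varepsilon$ for every $\alpha$ and every $\varepsilon>0$, and letting $\varepsilon\downarrow 0$ at each fixed $\alpha$ yields $u^{\pm}_{\alpha}\leq v^{\pm}_{\alpha}$, i.e. $u\preceq v$. Statements (iii), (iv) and (v) are then the level-wise transcriptions of transitivity of $\leq$, of adding two inequalities (using the endpoint addition formula), and of cancelling the common summand $w^{\pm}_{\alpha}$, respectively; the cancellation in (v) is legitimate precisely because each $w^{\pm}_{\alpha}$ is a finite real number. I expect no genuine obstacle: the arguments for (ii)--(v) are purely formal, and the single point deserving care is the interaction of the supremum defining $D$ with the level-wise inequalities in (i), which the uniformity in $\alpha$ renders routine.
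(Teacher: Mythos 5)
Your proof is correct, and all five parts go through as you describe: the crisp number $\overline{\varepsilon}$ has level sets $[\varepsilon,\varepsilon]$, so the order relations with $u\pm\overline{\varepsilon}$ translate exactly into the uniform endpoint inequalities, and parts (ii)--(v) are indeed routine level-wise manipulations of real inequalities (with the cancellation in (v) valid because the endpoints $w^{\pm}_{\alpha}$ are finite reals). The one point worth noting is that the paper itself gives no proof at all: it obtains the lemma by citing and combining results from four external references (Lemma 6 of Aytar--Mammadov--Pehlivan, Lemma 5 of Aytar--Pehlivan, Lemma 3.4 and Theorem 4.9 of Li--Wu, and Lemma 14 of Talo--Ba\c{s}ar). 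So your route is genuinely different in presentation: you give a self-contained, elementary argument reducing everything to the endpoint functions, which is essentially what those cited sources do internally anyway. What your approach buys is verifiability and independence from the literature; what the paper's approach buys is brevity and credit to the original sources. Either is acceptable, and your argument has no gaps.
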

\begin{definition}
A fuzzy-number-valued function $f:[a,b]\to E^1$ is said to be continuous at $x_0\in [a,b]$ if for each $\varepsilon>0$ there is a $\delta>0$ such that $D(f(x), f(x_0))<\varepsilon$ whenever $x\in [a,b]$ with $|x-x_0|<\delta$. If $f(x)$ is continuous at each $x\in [a,b]$, then we say $f(x)$ is continuous on $[a,b]$.
\end{definition}
\begin{definition}\cite{gv}
A fuzzy-valued function $f:[a,b]\to E^1$ is called Riemann integrable on $[a,b]$, if there exists $I\in E^1$ with the property : $\forall\varepsilon>0$, $\exists\delta>0$ such that for any division of $[a,b]$ $d : a=x_0<x_1<\cdots<x_n=b$ of norm $v(d)<\delta$, and for any points $\xi_i\in[x_i,x_{i+1}]\ i=\overline{0,n-1}$, we have $$D\left(\sum\limits_{i=0}^{n-1}f(\xi_i)(x_{i+1}-x_i), I\right)<\varepsilon.$$ Then $I=\int\limits_a^bf(x)dx$.
\end{definition}
\begin{theorem}\cite{gv}\label{fint}
If the fuzzy-number-valued function $f:[a,b]\to E^1$ is continuous (with respect to the metric D) and for each $x\in[a,b]$, $f(x)$ has the parametric representation $$[f(x)]_{\alpha}=[f_{\alpha}^-(x), f_{\alpha}^+(x)],$$ then $\int\limits_a^bf(x)dx$ exists, belongs to $E^1$ and is parametrized by
$$\left[\int\limits_a^bf(x)dx\right]_{\alpha}=\left[\int\limits_a^bf_{\alpha}^-(x)dx, \int\limits_a^bf_{\alpha}^+(x)dx\right].$$
\end{theorem}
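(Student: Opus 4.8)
The plan is to exhibit an explicit candidate for the integral and then prove that the fuzzy Riemann sums converge to it in the metric $D$. Define $I$ levelwise by $[I]_\alpha := \left[\int_a^b f_\alpha^-(x)\,dx,\ \int_a^b f_\alpha^+(x)\,dx\right]$; the work then splits into checking that this prescription really defines an element of $E^1$ and estimating the Riemann sums against it. First I would exploit compactness of $[a,b]$: continuity of $f$ with respect to $D$ on a compact interval upgrades to uniform continuity, and since $D(f(x),f(y))=\sup_\alpha\max\{|f_\alpha^-(x)-f_\alpha^-(y)|,|f_\alpha^+(x)-f_\alpha^+(y)|\}$, this says precisely that the two families $\{f_\alpha^-\}_{\alpha\in[0,1]}$ and $\{f_\alpha^+\}_{\alpha\in[0,1]}$ are uniformly equicontinuous on $[a,b]$. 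In particular each endpoint function $f_\alpha^{\pm}$ is continuous, hence classically Riemann integrable, so the integrals defining $[I]_\alpha$ exist. Since $f([a,b])$ is compact in $(E^1,D)$ it is bounded, giving a constant $M$ with $D(f(x),\overline{0})\le M$, whence $|f_\alpha^{\pm}(x)|\le M$ uniformly in $\alpha$ and $x$.

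Next I would verify that $[I]_\alpha$ satisfies the characterization of the $\alpha$-levels of a fuzzy number (\cite{gv}). Monotonicity is inherited from the integrand: for $\alpha\le\beta$ one has $f_\alpha^-(x)\le f_\beta^-(x)$ and $f_\alpha^+(x)\ge f_\beta^+(x)$ pointwise, and integration preserves these inequalities, so $\alpha\mapsto I^-(\alpha)$ is nondecreasing, $\alpha\mapsto I^+(\alpha)$ is nonincreasing, and $I^-(1)\le I^+(1)$. For left-continuity on $(0,1]$ and right-continuity at $0$, I would take $\alpha_n\uparrow\alpha$ (resp. $\alpha_n\downarrow 0$), invoke the corresponding one-sided continuity of $\alpha\mapsto f_\alpha^{\pm}(x)$ inherent in the fuzzy-number structure, and pass the limit through the integral; the uniform bound $M$ legitimises this interchange by dominated convergence. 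This is the step I expect to be the main obstacle, as it is where the pointwise level-set properties of each $f(x)$ must be married to a limit-integral exchange to guarantee $I\in E^1$.

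Finally I would estimate the Riemann sums. Because every $x_{i+1}-x_i\ge 0$, Lemma \ref{added}(iii)--(iv) together with the levelwise arithmetic of addition and scalar multiplication give $\left[\sum_i f(\xi_i)(x_{i+1}-x_i)\right]_\alpha=\left[\sum_i f_\alpha^-(\xi_i)(x_{i+1}-x_i),\ \sum_i f_\alpha^+(\xi_i)(x_{i+1}-x_i)\right]$. Fix $\varepsilon>0$ and, using uniform equicontinuity, choose $\delta>0$ so that $|x-y|<\delta$ forces $|f_\alpha^{\pm}(x)-f_\alpha^{\pm}(y)|<\varepsilon/(b-a)$ for every $\alpha$. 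For any tagged partition of norm $v(d)<\delta$, writing $\int_a^b f_\alpha^{\pm}=\sum_i\int_{x_i}^{x_{i+1}}f_\alpha^{\pm}(t)\,dt$ and comparing termwise yields $\left|\sum_i f_\alpha^{\pm}(\xi_i)(x_{i+1}-x_i)-\int_a^b f_\alpha^{\pm}\right|\le\sum_i\int_{x_i}^{x_{i+1}}|f_\alpha^{\pm}(\xi_i)-f_\alpha^{\pm}(t)|\,dt<\varepsilon$, and this bound is uniform in $\alpha$. Taking the supremum over $\alpha$ in the definition of $D$ then gives $D\left(\sum_i f(\xi_i)(x_{i+1}-x_i),\,I\right)<\varepsilon$, which is exactly the statement that $f$ is Riemann integrable with $\int_a^b f(x)\,dx=I$.
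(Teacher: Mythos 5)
The paper offers no proof of this theorem---it is imported as a known result from Goetschel--Voxman \cite{gv}---so there is nothing internal to compare against; judged on its own, your argument is correct and is essentially the standard proof of this result. The three ingredients you use (uniform equicontinuity of the endpoint families $f_\alpha^{\pm}$ obtained from $D$-continuity on a compact interval, the Goetschel--Voxman characterization of level-set endpoint functions together with a dominated-convergence argument in $\alpha$ to certify $I\in E^1$, and the uniform-in-$\alpha$ Riemann-sum estimate yielding convergence in $D$) are exactly the ingredients of the classical argument, so there is no gap to report.
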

Using the results of Anastassiou \cite{an} we have
\begin{theorem}\label{bessart}
If $f:[a,b]\to E^1$ and $g:[a,b]\to E^1$ are continuous then
\item [(i)] $\int\limits_a^b(\alpha f(x)+\beta g(x))dx=\alpha \int\limits_a^bf(x)dx+\beta\int\limits_a^bg(x)dx$ where $\alpha$ and $\beta$ are real numbers.
\item [(ii)] $\int\limits_a^bf(x)dx=\int\limits_a^cf(x)dx+\int\limits_c^bf(x)dx$ where $a<c<b$.
\item [(iii)]The function $F :[a,b]\to \mathbb{R_+}$ defined by $F(x)=D(f(x),g(x))$ is continuous on $[a, b]$ and\\ $$D\left(\int\limits_a^bf(x)dx,\int\limits_a^bg(x)dx \right)\leq \int\limits_a^bF(x)dx.$$
\item [(iv)] $\int\limits_a^xf(t)dt$ is a continuous function in $x\in[a,b]$.
\item [(v)] $\int\limits_a^b f(x)dx \preceq \int\limits_a^b g(x)dx $  whenever   $f(x)\preceq g(x)$ for all $x\in[a,b]$.
\end{theorem}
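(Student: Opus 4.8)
The plan is to reduce each assertion to a classical fact about real-valued Riemann integrals by means of the parametric representation furnished by Theorem \ref{fint}, reserving a genuine Riemann-sum argument for the integral inequality in part (iii), which I expect to be the only non-routine step.

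For parts (i), (ii) and (v) I would argue levelwise, writing $\gamma$ for the level parameter to distinguish it from the scalars $\alpha,\beta$ in (i). By Theorem \ref{fint}, for continuous $f$ the $\gamma$-cut of $\int_a^b f$ is $[\int_a^b f_\gamma^-, \int_a^b f_\gamma^+]$. In (i), I would pass to level sets via $[\alpha f(x)+\beta g(x)]_\gamma = \alpha[f(x)]_\gamma + \beta[g(x)]_\gamma$ and use classical linearity of the ordinary integral on the endpoint functions. The one subtlety is that multiplication by a negative constant swaps the endpoints of a cut (since $[ku]_\gamma = k[u]_\gamma$), so I would split into the sign cases of $\alpha$ and $\beta$; in each case the endpoints of $[\int_a^b(\alpha f+\beta g)]_\gamma$ are matched with those of $[\alpha\int_a^b f + \beta\int_a^b g]_\gamma$. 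Part (ii) is immediate from the additivity $\int_a^b h = \int_a^c h + \int_c^b h$ of the real integral applied to $h = f_\gamma^\pm$, and part (v) follows because $f \preceq g$ means $f_\gamma^- \le g_\gamma^-$ and $f_\gamma^+ \le g_\gamma^+$ pointwise, whence monotonicity of the real integral transfers to the endpoints of $\int_a^b f$ and $\int_a^b g$.

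For part (iii), continuity of $F(x)=D(f(x),g(x))$ follows from the continuity of $f,g$ and the elementary inequality $|D(f(x),g(x)) - D(f(x_0),g(x_0))| \le D(f(x),f(x_0)) + D(g(x),g(x_0))$, a consequence of the triangle inequality in the metric space $(E^1,D)$ of Proposition \ref{p02}(i). The integral estimate is the heart of the matter, and I would derive it from the defining Riemann sums. For a tagged partition $d$ with tags $\xi_i$ and widths $\Delta x_i>0$, repeated use of Proposition \ref{p02}(iv) yields $D(\sum_i f(\xi_i)\Delta x_i, \sum_i g(\xi_i)\Delta x_i) \le \sum_i D(f(\xi_i)\Delta x_i, g(\xi_i)\Delta x_i)$, and by Proposition \ref{p02}(ii) each summand equals $\Delta x_i\,D(f(\xi_i),g(\xi_i)) = \Delta x_i\,F(\xi_i)$. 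As $v(d)\to 0$ the left-hand side tends to $D(\int_a^b f,\int_a^b g)$ by continuity of $D$ and the definition of the fuzzy integral, while the right-hand side is an ordinary Riemann sum for the continuous function $F$ and converges to $\int_a^b F$, giving the inequality.

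Finally, part (iv) I would deduce from (ii) and (iii). For $x_1<x_2$, additivity gives $\int_a^{x_2} f = \int_a^{x_1} f + \int_{x_1}^{x_2} f$, so Proposition \ref{p02}(iii) yields $D(\int_a^{x_2} f, \int_a^{x_1} f) = D(\int_{x_1}^{x_2} f, \overline{0})$. Applying (iii) with $g\equiv\overline{0}$ bounds this by $\int_{x_1}^{x_2} D(f(t),\overline{0})\,dt \le M\,|x_2-x_1|$, where $M=\sup_{[a,b]} D(f(t),\overline{0})$ is finite by continuity of $f$ on the compact interval $[a,b]$; letting $x_2\to x_1$ gives the continuity. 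The main obstacle throughout is the Riemann-sum limiting argument of (iii); once it is secured, every remaining part is a levelwise restatement of a classical theorem.
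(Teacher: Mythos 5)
Your proof is correct, but there is nothing in the paper to compare it against: the paper does not prove Theorem \ref{bessart} at all. It is stated with the preamble ``Using the results of Anastassiou \cite{an} we have'', i.e.\ the whole statement is imported from the cited literature. What you have done is reconstruct those standard proofs, and the reconstruction is sound: the levelwise reduction of (i), (ii), (v) via Theorem \ref{fint} is exactly how these identities are established (your attention to the endpoint swap under negative scalars is the right, and essentially only, subtlety in (i)); the Riemann-sum argument for (iii), applying Proposition \ref{p02}(iv) summand by summand, Proposition \ref{p02}(ii) to pull out $\Delta x_i>0$, and passing to the limit on both sides along the same tagged partitions, is a legitimate derivation of the metric inequality; and deducing (iv) from (ii) and (iii) with $g\equiv\overline{0}$, which in fact gives the Lipschitz bound $D\bigl(\int_a^{x_1}f,\int_a^{x_2}f\bigr)\leq M|x_2-x_1|$, is again a standard route. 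Two points worth tightening: in (i), Theorem \ref{fint} is only stated for continuous integrands, so you should record that $\alpha f+\beta g$ is continuous, via $D(\alpha u+\beta v,\alpha u'+\beta v')\leq|\alpha|D(u,u')+|\beta|D(v,v')$; and in (iii) the joint continuity of $D$ used to pass to the limit on the left-hand side deserves its one-line justification $|D(u_n,v_n)-D(u,v)|\leq D(u_n,u)+D(v_n,v)$. Note also that (iii) admits a shorter proof in the same levelwise spirit as your other parts, bypassing Riemann sums: by Theorem \ref{fint}, $D\bigl(\int_a^b f,\int_a^b g\bigr)=\sup_{\gamma}\max\bigl\{\bigl|\int_a^b(f_\gamma^--g_\gamma^-)\bigr|,\bigl|\int_a^b(f_\gamma^+-g_\gamma^+)\bigr|\bigr\}\leq\int_a^b\sup_{\gamma}\max\{|f_\gamma^--g_\gamma^-|,|f_\gamma^+-g_\gamma^+|\}\,dx=\int_a^b F(x)\,dx$, which makes the whole theorem a corollary of the parametric representation.
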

\begin{definition}
Suppose $f(x)$ is a fuzzy-number-valued function defined on the unbounded interval $[a, \infty)$. Then we define
\begin{eqnarray*}
\int_a^{\infty}f(x)dx=\lim_{t\to\infty}\int_a^{t}f(x)dx
\end{eqnarray*}
provided the limit on the right-hand side exists in $E^1$, in which case we say the integral converges and is equal to the value of limit. Otherwise, we say the integral diverges.
\end{definition}
\section{Main Results}
\begin{definition}
Let $f:[0,\infty)\rightarrow E^1$ be a continuous fuzzy-number-valued function  and $s(t)=\int\limits_0^{t}f(x)dx$. The Ces\`{a}ro means of $s(t)$ are defined by
\begin{eqnarray}\label{ortalama}
\sigma(t)=\frac{1}{t}\int_0^{t}s(u)du, \qquad t\in(0,\infty).
\end{eqnarray}
The integral
\begin{eqnarray}\label{integral}
\int_0^{\infty}f(x)dx
\end{eqnarray}
is said to be Ces\`{a}ro summable to a fuzzy number L if $\lim_{t\to \infty}\sigma(t)=L$.
The value of this limit is said to be the Ces\`{a}ro sum of the integral.
\end{definition}
\begin{theorem}\label{regular}
If the integral (\ref{integral}) converges to a fuzzy number L, then (\ref{ortalama}) also converges to L.
\end{theorem}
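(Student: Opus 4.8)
The plan is to reduce the fuzzy statement to the classical regularity of the Cesàro mean by translating everything into estimates for the metric $D$, since $E^1$ lacks a genuine additive inverse (Lemma \ref{added}(ii)) and one cannot simply form $\sigma(t)-L$ as in the scalar setting. The starting observation is that the constant fuzzy number $L$ can itself be written as a Cesàro mean: applying Theorem \ref{fint} to the constant integrand gives $\int_0^t L\,du = tL$ at the level of $\alpha$-cuts, and then Lemma \ref{added}(v) yields $\frac1t\left(\int_0^t L\,du\right) = \frac1t(tL) = L$ for every $t>0$.

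With this in hand I would rewrite the quantity to be estimated as $D(\sigma(t),L) = D\!\left(\frac1t\int_0^t s(u)\,du,\ \frac1t\int_0^t L\,du\right)$ and apply Proposition \ref{p02}(ii) with $k=1/t$ to pull the scalar out, obtaining $D(\sigma(t),L) = \frac1t\,D\!\left(\int_0^t s(u)\,du,\ \int_0^t L\,du\right)$. The crucial inequality is then Theorem \ref{bessart}(iii), comparing the integral of $s$ with the integral of the constant function $L$; it yields $D(\sigma(t),L)\le \frac1t\int_0^t D(s(u),L)\,du$. This is the key step that converts the fuzzy problem into an honest real-variable one.

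Set $\phi(u):=D(s(u),L)$, a nonnegative real-valued function that is continuous (because $s(u)=\int_0^u f$ is continuous by Theorem \ref{bessart}(iv) and $D$ is continuous) and that tends to $0$ as $u\to\infty$, precisely because convergence of the integral means $s(u)\to L$ in $(E^1,D)$. It then remains to show $\frac1t\int_0^t\phi(u)\,du\to 0$, which is the classical statement that the Cesàro operator is regular: given $\varepsilon>0$, I would choose $T$ with $\phi(u)<\varepsilon/2$ for $u\ge T$, split $\int_0^t=\int_0^T+\int_T^t$, bound the tail by $\frac{t-T}{t}\cdot\frac\varepsilon2\le\frac\varepsilon2$, note that $M:=\int_0^T\phi(u)\,du$ is finite by continuity, and finally take $t$ large enough that $M/t<\varepsilon/2$.

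I do not expect a serious obstacle once the reduction is made; the only real care lies in the first two steps, where the absence of subtraction in $E^1$ forces one to express $L$ as a mean and to work entirely through the metric via Proposition \ref{p02}(ii) and Theorem \ref{bessart}(iii), rather than manipulating $s(u)-L$ directly. After that, the argument is the familiar $\varepsilon/2$ splitting for the scalar Cesàro mean.
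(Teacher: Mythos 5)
Your proposal is correct and follows essentially the same route as the paper's proof: rewrite $L$ as $\frac{1}{t}\int_0^t L\,du$, pull out the factor $\frac{1}{t}$ via Proposition \ref{p02}(ii), apply Theorem \ref{bessart}(iii) to reduce to the real-valued function $D(s(u),L)$, and finish with the classical $\varepsilon/2$ splitting of the integral at a point beyond which $D(s(u),L)$ is small. The only cosmetic difference is that the paper bounds $D(s(u),L)$ by a constant $M$ on the initial segment while you bound the initial integral itself; both rest on the same continuity observation.
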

\begin{proof}
Let $$\lim\limits_{t\to \infty}s(t)=\int_0^{\infty}f(x)dx=L$$ for  some $L\in E^1$. Then given any $\varepsilon>0 $ there exists $t_0>0$ such that $D(s(t), L)<\frac{\varepsilon}{2}$ whenever $t\geq t_0$ and there exists $M>0$ such that $D(s(t), L)<M$ whenever $t< t_0$. So we have
\begin{eqnarray*}
D(\sigma(t),L)&=&D\left(\frac{1}{t}\int_0^ts(u)du, L\right)
\\&=&
D\left(\frac{1}{t}\int_0^ts(u)du,\frac{1}{t}\int_0^tLdu\right)
\\&=&
\frac{1}{t}D\left(\int_0^ts(u)du, \int_0^tLdu\right)
\\&\leq &
\frac{1}{t}\int_0^tD(s(u),L)du
\\&=&
\frac{1}{t}\int_0^{t_0}D(s(u),L)du+ \frac{1}{t}\int_{t_0}^tD(s(u),L)du
\\&\leq &
\frac{t_0M}{t}+\frac{\varepsilon}{2}\frac{(t-t_0)}{t}< \frac{t_0M}{t}+ \frac{\varepsilon}{2}
\end{eqnarray*}
Since $\lim\limits_{t\to\infty}\frac{t_0M}{t}=0$, there exists $t_1>0$ such that $\left|\frac{t_0M}{t}\right|<\frac{\varepsilon}{2}$ whenever $t\geq t_1$. So there exists $t_2=\max\{t_0,t_1\}$ such that $$D(\sigma(t),L)<\varepsilon$$ whenever $t\geq t_2$. This completes the proof.
\end{proof}
By the following example it can be easily seen that the converse statement of Theorem \ref{regular} is not true in general.
\begin{example}
Take the fuzzy-number-valued function  $f: [0,\infty) \to E^1$  such that
{\small\begin{eqnarray*}
(f(x))(t)=
\begin{cases}
(t-\cos x).(x+1)^2, & \quad \textrm{if} \quad  \cos x\leq t\leq \cos x+\frac{1}{(1+x)^2}, \\
2-(t-\cos x).(x+1)^2, &  \quad \textrm{if} \quad \cos x+\frac{1}{(1+x)^2}\leq t\leq \cos x+\frac{2}{(1+x)^2}, \\
0, & \qquad  \quad \textrm{otherwise}.
\end{cases}
\end{eqnarray*}}
Then $f$ is continuous and
{\small\begin{eqnarray*}
f^-_{\alpha}(x)=\cos x+\frac{\alpha}{(x+1)^2}\qquad\qquad &,&\qquad\qquad f^+_{\alpha}(x)=\cos x+\frac{2-\alpha}{(x+1)^2}\\
\int_{0}^{t}f_{\alpha}^-(x)dx=sint+\alpha\left(1-\frac{1}{t+1}\right)\quad &,& \quad \int_{0}^{t}f_{\alpha}^+(x)dx=sint+(2-\alpha)\left(1-\frac{1}{t+1}\right)
\end{eqnarray*}}
Obviously $\int\limits_{0}^{\infty}f(x)dx$ is divergent. To calculate Ces\`{a}ro mean, considering (\ref{ortalama}) we have
{\small\begin{eqnarray*}
\sigma_{\alpha}^-(t)&=&\frac{1}{t}\int_0^{t}s_{\alpha}^-(u)du=\frac{1}{t}\int_0^{t}\left(\int_0^{u}f_{\alpha}^-(x)dx\right)du=-\frac{\cos t}{t}+\frac{1}{t}+\alpha\left(1-\frac{\ln (t+1)}{t}\right)\\
\sigma_{\alpha}^+(t)&=&\frac{1}{t}\int_0^{t}s_{\alpha}^+(u)du=\frac{1}{t}\int_0^{t}\left(\int_0^{u}f_{\alpha}^+(x)dx\right)du=-\frac{\cos t}{t}+\frac{1}{t}+(2-\alpha)\left(1-\frac{\ln (t+1)}{t}\right).
\end{eqnarray*}}
So we get
\begin{eqnarray*}
\begin{rcases}
\lim_{t\to \infty}\sigma_{\alpha}^-(t)=\alpha \\
\lim_{t\to \infty}\sigma_{\alpha}^+(t)=2-\alpha\end{rcases}
\qquad [u]_{\alpha}=[\alpha, 2-\alpha] \qquad and \qquad \lim_{t\to \infty}D(\sigma(t),u)=0.
\end{eqnarray*}

Then $\int\limits_{0}^{\infty}f(x)dx$ is Ces\`{a}ro summable to fuzzy number $u$ such that
\begin{eqnarray*}
u(t)=
\begin{cases}
\ \ t &\quad \textrm{if} \quad  0\leq t\leq 1, \\
2-t &  \quad \textrm{if} \quad 1\leq t\leq 2, \\
\ \ 0 & \quad \quad  \ \  \textrm{otherwise}.
\end{cases}
\end{eqnarray*}
\end{example}
We need the following Lemma for the proofs of our main results.
\begin{lemma}\label{extra}
If $s$ be a continuous fuzzy-number-valued function then for every $\lambda>1$
\begin{eqnarray}\label{es1}
\frac{1}{\lambda t-t}\int_t^{\lambda t}s(x)dx + \frac{1}{\lambda-1}\sigma(t)=\sigma(\lambda t)+\frac{1}{\lambda-1}\sigma(\lambda t)
\end{eqnarray}
and for every $0<\ell<1$
\begin{eqnarray}\label{es2}
\frac{1}{t-\ell t}\int_{\ell t}^{ t}s(x)dx + \frac{\ell}{1-\ell}\sigma(\ell t)=\sigma( t)+\frac{\ell}{1-\ell}\sigma(t).
\end{eqnarray}
\end{lemma}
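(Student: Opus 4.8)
The plan is to derive both identities purely from the additivity of the fuzzy Riemann integral together with the scalar-multiplication rules of Lemma \ref{added}, scrupulously avoiding any use of subtraction — which is unavailable here, since by Lemma \ref{added}(ii) a generic fuzzy number has no additive inverse. The whole point is that every scalar coefficient occurring below is positive (because $\lambda>1$, resp. $0<\ell<1$), so that the distributive law $(a+b)u=au+bu$ of Lemma \ref{added}(iii), which holds \emph{only} for same-sign coefficients, is applicable.

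For (\ref{es1}), first I would split the defining integral of $\sigma(\lambda t)$. Recalling that $(\lambda t)\sigma(\lambda t)=\int_0^{\lambda t}s(u)\,du$, Theorem \ref{bessart}(ii) gives
\[
(\lambda t)\sigma(\lambda t)=\int_0^{t}s(u)\,du+\int_t^{\lambda t}s(u)\,du = t\,\sigma(t)+\int_t^{\lambda t}s(u)\,du .
\]
Next I would multiply this equality through by the positive scalar $\frac{1}{(\lambda-1)t}$, using Lemma \ref{added}(iv) to distribute it over the sum on the right and Lemma \ref{added}(v) to collapse the iterated scalar multiples; since $\lambda t-t=(\lambda-1)t$, this yields
\[
\frac{\lambda}{\lambda-1}\,\sigma(\lambda t)=\frac{1}{\lambda-1}\,\sigma(t)+\frac{1}{\lambda t-t}\int_t^{\lambda t}s(u)\,du .
\]
Finally, on the right-hand side of (\ref{es1}) I would invoke Lemma \ref{added}(iii) with the two positive coefficients $1$ and $\tfrac{1}{\lambda-1}$ to write $\sigma(\lambda t)+\frac{1}{\lambda-1}\sigma(\lambda t)=\bigl(1+\tfrac{1}{\lambda-1}\bigr)\sigma(\lambda t)=\frac{\lambda}{\lambda-1}\sigma(\lambda t)$, which is the left-hand side of the previous display; rearranging the summands (addition is commutative) then produces exactly (\ref{es1}).

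The identity (\ref{es2}) is obtained the same way from the splitting $t\,\sigma(t)=\ell t\,\sigma(\ell t)+\int_{\ell t}^{t}s(u)\,du$ (again Theorem \ref{bessart}(ii)), multiplying by $\frac{1}{(1-\ell)t}>0$, and using Lemma \ref{added}(iii) with the positive coefficients $1$ and $\tfrac{\ell}{1-\ell}$ to rewrite $\sigma(t)+\frac{\ell}{1-\ell}\sigma(t)=\frac{1}{1-\ell}\sigma(t)$. I do not expect a genuinely hard step: the only thing to watch is the sign bookkeeping that justifies each appeal to Lemma \ref{added}(iii), i.e.\ verifying that the coefficients paired by the distributive law are indeed of the same sign, which is precisely where the hypotheses $\lambda>1$ and $0<\ell<1$ enter. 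Alternatively, one could pass to $\alpha$-level endpoints via Theorem \ref{fint} and check each identity for $s_\alpha^-$ and $s_\alpha^+$ as ordinary scalar identities; the positivity of the coefficients again guarantees that the endpoints do not swap.
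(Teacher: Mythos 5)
Your proposal is correct and follows essentially the same route as the paper's own proof: both rest on splitting the integral via Theorem \ref{bessart}(ii), distributing the positive scalar $\frac{1}{(\lambda-1)t}$ (resp.\ $\frac{1}{(1-\ell)t}$) with Lemma \ref{added}(iv)--(v), and collapsing $\sigma(\lambda t)+\frac{1}{\lambda-1}\sigma(\lambda t)=\frac{\lambda}{\lambda-1}\sigma(\lambda t)$ via the same-sign distributive law of Lemma \ref{added}(iii). The paper simply writes the identical chain of equalities starting from the right-hand side, so your version differs only in presentation (and in making the sign bookkeeping explicit, which is a nice touch).
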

\begin{proof}
Let $s$ be a continuous fuzzy-number-valued function. Then for every $\lambda>1$ we have
\begin{eqnarray*}
\sigma(\lambda t)+\frac{1}{\lambda-1}\sigma(\lambda t)&=&\frac{\lambda}{\lambda-1}\sigma(\lambda t)
\\&=&
\frac{\lambda}{\lambda-1}\frac{1}{\lambda t}\int_0^{\lambda t}s(x)dx
\\&=&
\frac{1}{(\lambda-1)t}\left\{\int_0^{t}s(x)dx+\int_t^{\lambda t}s(x)dx\right\}
\\&=&
\frac{1}{\lambda-1}\sigma(t)+ \frac{1}{t(\lambda -1)}\int_t^{\lambda t}s(x)dx
\end{eqnarray*}
by Lemma \ref{added} and Theorem \ref{bessart}. On the other hand for every $0<\ell<1$, using Lemma \ref{added} and Theorem \ref{bessart} again,  we get
\begin{eqnarray*}
\sigma( t)+\frac{\ell}{1-\ell}\sigma(t)&=&\frac{1}{1-\ell}\sigma( t)
\\&=&
\frac{1}{1-\ell}\frac{1}{t}\int_0^{ t}s(x)dx
\\&=&
\frac{1}{1-\ell}\frac{1}{t}\left\{\int_0^{\ell t}s(x)dx+\int_{\ell t}^{t}s(x)dx\right\}
\\&=&
\frac{\ell}{1-\ell}\frac{1}{\ell t}\int_0^{\ell t}s(x)dx+ \frac{1}{t(1-\ell)}\int_{\ell t}^{t}s(x)dx
\\&=&
\frac{\ell}{1-\ell}\sigma(\ell t)+ \frac{1}{t-\ell t}\int_{\ell t}^{ t}s(x)dx.
\end{eqnarray*}
So equalities (\ref{es1}) and (\ref{es2}) are satisfied.
\end{proof}
As a result of Lemma \ref{extra} we conclude the following lemma.
\begin{lemma}\label{lemma}
If integral (\ref{integral}) is Ces\`{a}ro summable to a fuzzy number $L$, then for every $\lambda>1$
\begin{eqnarray}\label{L1}
\lim_{t\to\infty}\frac{1}{\lambda t-t}\int_t^{\lambda t}s(x)dx=L
\end{eqnarray}
and for every $0<\ell<1$
\begin{eqnarray}\label{L2}
\lim_{t\to\infty}\frac{1}{t-\ell t}\int_{\ell t}^{ t}s(x)dx =L.
\end{eqnarray}
\end{lemma}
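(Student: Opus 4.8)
The plan is to read off both limit relations directly from the two algebraic identities furnished by Lemma \ref{extra}, converting each into a metric estimate via Proposition \ref{p02}. The starting observation is that Ces\`{a}ro summability of (\ref{integral}) to $L$ means precisely $\lim_{t\to\infty}D(\sigma(t),L)=0$; since $\lambda t\to\infty$ and $\ell t\to\infty$ as $t\to\infty$ for fixed $\lambda>1$ and $0<\ell<1$, it follows that $\sigma(\lambda t)\to L$ and $\sigma(\ell t)\to L$ as well. Thus every Ces\`{a}ro mean appearing in (\ref{es1}) and (\ref{es2}) already converges to $L$, and the only task is to transfer this convergence to the averages over $[t,\lambda t]$ and $[\ell t,t]$.

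For (\ref{L1}), write $A(t)=\frac{1}{\lambda t-t}\int_t^{\lambda t}s(x)\,dx$, so that identity (\ref{es1}) reads $A(t)+\frac{1}{\lambda-1}\sigma(t)=\frac{\lambda}{\lambda-1}\sigma(\lambda t)$. The care needed is that $E^1$ admits no additive inverses (Lemma \ref{added}(ii)), so $A(t)$ cannot be isolated by subtracting $\frac{1}{\lambda-1}\sigma(t)$; the estimate must run through the metric. First, by translation invariance (Proposition \ref{p02}(iii)), $D(A(t),L)=D\big(A(t)+\frac{1}{\lambda-1}\sigma(t),\,L+\frac{1}{\lambda-1}\sigma(t)\big)$. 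Inserting the intermediate point $\frac{\lambda}{\lambda-1}L=L+\frac{1}{\lambda-1}L$ (this equality holds by Lemma \ref{added}(iii), both coefficients being nonnegative) and applying the triangle inequality, then using identity (\ref{es1}) on the first summand, translation invariance again on the second, and homogeneity (Proposition \ref{p02}(ii)), yields
$$D(A(t),L)\leq \frac{\lambda}{\lambda-1}D(\sigma(\lambda t),L)+\frac{1}{\lambda-1}D(\sigma(t),L).$$
Both terms on the right tend to $0$ as $t\to\infty$, which proves (\ref{L1}).

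The relation (\ref{L2}) follows by the identical scheme applied to identity (\ref{es2}): setting $B(t)=\frac{1}{t-\ell t}\int_{\ell t}^{t}s(x)\,dx$ and using $B(t)+\frac{\ell}{1-\ell}\sigma(\ell t)=\frac{1}{1-\ell}\sigma(t)$ together with $L+\frac{\ell}{1-\ell}L=\frac{1}{1-\ell}L$, the same manipulation gives
$$D(B(t),L)\leq \frac{1}{1-\ell}D(\sigma(t),L)+\frac{\ell}{1-\ell}D(\sigma(\ell t),L),$$
whose right-hand side again vanishes in the limit, establishing (\ref{L2}).

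The only genuine obstacle is the absence of subtraction in $E^1$: the clean move of subtracting the common summand from both sides of (\ref{es1}) and (\ref{es2}) is unavailable, so the entire argument is routed through the metric $D$, using translation invariance to restore a common term and the triangle inequality to peel it off. Everything else is bookkeeping with the properties of Proposition \ref{p02} and the sign-restricted distributivity of Lemma \ref{added}(iii).
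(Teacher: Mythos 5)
Your proof is correct and follows exactly the route the paper intends: the paper states this lemma without proof, presenting it as an immediate consequence of Lemma \ref{extra} together with Ces\`{a}ro summability, and your argument (rewriting each identity via Lemma \ref{added}(iii), then using translation invariance, the triangle inequality, and homogeneity of $D$ to peel off the common summand) supplies precisely the details the paper leaves implicit.
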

Now we give Tauberian conditions under which convergence of the improper integral follows from Ces\`{a}ro summability.
\begin{theorem}\label{maintheorem}
Let fuzzy-number-valued function $f:[0,\infty)\rightarrow E^1$ be continuous. If integral (\ref{integral}) is Ces\`{a}ro summable to a fuzzy number L, then it converges to L if and only if for every $\varepsilon>0$ there exist $t_0\geq 0$ and $\lambda>1$ such that for $t> t_0$
\begin{eqnarray}\label{*}
\frac{1}{\lambda t-t}\int_t^{\lambda t}s(x)dx \succeq s(t)-\overline{\varepsilon}
\end{eqnarray}
and another $0<\ell<1$ such that
\begin{eqnarray}\label{**}
\frac{1}{t-\ell t}\int_{\ell t}^{ t}s(x)dx \preceq s(t)+\overline{\varepsilon}.
\end{eqnarray}
\end{theorem}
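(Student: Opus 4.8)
The plan is to prove the two implications separately, treating the ``only if'' (necessity) direction as routine and the ``if'' (Tauberian) direction as the substantive part. Throughout, the engine is Lemma \ref{lemma}: since the integral is Ces\`{a}ro summable to $L$, for \emph{every} fixed $\lambda>1$ the forward average $A(t):=\frac{1}{\lambda t-t}\int_t^{\lambda t}s(x)dx$ converges to $L$, and for every fixed $0<\ell<1$ the backward average $B(t):=\frac{1}{t-\ell t}\int_{\ell t}^{t}s(x)dx$ converges to $L$. So in both directions the interval averages are already pinned to $L$; the only quantity that must still be controlled is $s(t)$ itself.

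For necessity, suppose the integral converges, i.e.\ $\lim_{t\to\infty}s(t)=L$. Fix any $\varepsilon>0$ and choose, say, $\lambda=2$ and $\ell=\tfrac12$. By Lemma \ref{lemma} and the triangle inequality in the metric space $(E^1,D)$ (Proposition \ref{p02}(i)), $D(A(t),s(t))\le D(A(t),L)+D(L,s(t))\to 0$, and likewise $D(B(t),s(t))\to 0$. Hence for all large $t$ both distances are below $\varepsilon$, and Lemma \ref{epsilon}(i) converts each distance bound into the two-sided order relation $s(t)-\overline{\varepsilon}\preceq A(t)$ and $B(t)\preceq s(t)+\overline{\varepsilon}$; these are exactly (\ref{*}) and (\ref{**}).

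The Tauberian direction is where the work lies. Assume (\ref{*}) and (\ref{**}) and fix $\varepsilon>0$; let $t_0,\lambda,\ell$ be the data they supply. First I would rewrite the two one-sided hypotheses as bounds on $s(t)$ alone. Adding $\overline{\varepsilon}$ to (\ref{*}) and using Lemma \ref{epsilon}(iv) together with the crisp cancellation $\overline{\varepsilon}+\overline{-\varepsilon}=\overline{0}$ and Lemma \ref{added}(i) gives $s(t)\preceq A(t)+\overline{\varepsilon}$; symmetrically, adding $\overline{-\varepsilon}$ to (\ref{**}) gives $B(t)-\overline{\varepsilon}\preceq s(t)$. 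Next, because $A(t)\to L$ and $B(t)\to L$ by Lemma \ref{lemma}, for all large $t$ Lemma \ref{epsilon}(i) yields $A(t)\preceq L+\overline{\varepsilon}$ and $L-\overline{\varepsilon}\preceq B(t)$. Chaining these through Lemma \ref{epsilon}(iv) and transitivity (Lemma \ref{epsilon}(iii)) sandwiches $s(t)$:
\begin{equation*}
L-\overline{2\varepsilon}\preceq B(t)-\overline{\varepsilon}\preceq s(t)\preceq A(t)+\overline{\varepsilon}\preceq L+\overline{2\varepsilon}.
\end{equation*}
A final application of Lemma \ref{epsilon}(i) turns this order sandwich back into the metric statement $D(s(t),L)\le 2\varepsilon$ for all $t$ beyond the threshold $\max\{t_0,t_1\}$, where $t_1$ absorbs the convergence $A,B\to L$. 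Since $\varepsilon>0$ is arbitrary, $\lim_{t\to\infty}s(t)=L$, i.e.\ the integral converges to $L$.

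The genuine obstacle is not the limiting argument, which mimics the classical sandwich proof, but the fuzzy bookkeeping: $E^1$ has no additive inverses (Lemma \ref{added}(ii)), so ``moving $\overline{\varepsilon}$ across an inequality'' is not subtraction but an application of the order-compatibility of $+$ (Lemma \ref{epsilon}(iv)) followed by the crisp identity $\overline{\varepsilon}+\overline{-\varepsilon}=\overline{0}$. I would take particular care that the constant $\overline{2\varepsilon}$ really arises as $\overline{\varepsilon}+\overline{\varepsilon}$ via the level-set computation $[\overline{\varepsilon}]_\alpha+[\overline{\varepsilon}]_\alpha=[2\varepsilon,2\varepsilon]$, and that every passage between $D$ and $\preceq$ is licensed by Lemma \ref{epsilon}(i). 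With those conversions in hand the estimate is forced.
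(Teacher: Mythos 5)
Your proof is correct in both directions, and it is worth noting where it parts ways with the paper. The necessity half is identical to the paper's: triangle inequality, Lemma \ref{lemma}, and Lemma \ref{epsilon}(i). The sufficiency half, however, takes a different route through the paper's toolkit. The paper does not convert (\ref{*}) into a bound on $s(t)$ by the forward average and then cite Lemma \ref{lemma}; instead it works directly with the identity (\ref{es1}) of Lemma \ref{extra}: it estimates $D\bigl(\tfrac{1}{\lambda-1}\sigma(t),\tfrac{1}{\lambda-1}\sigma(\lambda t)\bigr)$ and $D(\sigma(\lambda t),L)$ with an $\varepsilon/3$ split, substitutes these together with (\ref{*}) into (\ref{es1}), and then strips off the common summand $\tfrac{1}{\lambda-1}\sigma(t)$ using the cancellation law, Lemma \ref{epsilon}(v). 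Your version delegates exactly that work to Lemma \ref{lemma} --- which is legitimate, since the paper states it (as a consequence of Lemma \ref{extra}) before the theorem --- and then finishes by pure order chaining: $s(t)\preceq A(t)+\overline{\varepsilon}\preceq L+\overline{2\varepsilon}$ and $L-\overline{2\varepsilon}\preceq B(t)-\overline{\varepsilon}\preceq s(t)$, via Lemma \ref{epsilon}(i),(iii),(iv). What your route buys is brevity and safer bookkeeping: the only cancellations you perform involve crisp numbers $\overline{\varepsilon}$, which genuinely have additive inverses, whereas the paper must invoke the cancellation law for the fuzzy summand $\tfrac{1}{\lambda-1}\sigma(t)$. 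What the paper's inline argument buys is self-containedness (it effectively re-proves the content of Lemma \ref{lemma} on the spot, via translation by $\tfrac{1}{\lambda-1}\sigma(t)$) and a final bound of $\varepsilon$ rather than $2\varepsilon$, though that difference is cosmetic since $\varepsilon>0$ is arbitrary. Your care about the non-existence of additive inverses (Lemma \ref{added}(ii)) and the explicit use of $\overline{\varepsilon}+\overline{-\varepsilon}=\overline{0}$ is exactly the point where a naive transcription of the classical argument would break, and you handle it correctly.
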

\begin{proof}{\it    Necessity}.
Let the integral (\ref{integral}) converge to $L$. Using inequality
\begin{eqnarray*}
D\left(\frac{1}{\lambda t-t}\int_t^{\lambda t}s(x)dx, s(t)\right)\leq D\left(\frac{1}{\lambda t-t}\int_t^{\lambda t}s(x)dx, L\right)+D(L,s(t)),
\end{eqnarray*}
if we consider the equality (\ref{L1}) in Lemma \ref{lemma} then for $\lambda>1$ we obtain
\begin{eqnarray*}
\lim_{t\to \infty}D\left(\frac{1}{\lambda t-t}\int_t^{\lambda t}s(x)dx, s(t)\right)=0.
\end{eqnarray*}
For $0<\ell<1$, validity of (\ref{**}) can also be obtained analogously by using the equality (\ref{L2}) of Lemma \ref{lemma}.

{\it    Sufficiency}. Assume that integral (\ref{integral}) is Ces\`{a}ro summable to $L$ and (\ref{*}), (\ref{**}) are satisfied. By (\ref{*}), there exist $t_1\geq 0$ and $\lambda>1$ such that for $t>t_1$
\begin{eqnarray*}
\frac{1}{\lambda t-t}\int_t^{\lambda t}s(x)dx \succeq s(t)-\frac{\overline{\varepsilon}}{3}\ \cdot
\end{eqnarray*}
Besides since
\begin{eqnarray*}
\lim\limits_{t\to \infty}D\left(\frac{1}{\lambda-1}\sigma(t), \frac{1}{\lambda-1}\sigma(\lambda t)\right)=0,
\end{eqnarray*}
there exists $t_2\geq 0$ such that for $t>t_2$
\begin{eqnarray*}
D\left(\frac{1}{\lambda-1}\sigma(t), \frac{1}{\lambda-1}\sigma(\lambda t)\right)\leq\frac{\varepsilon}{3}\cdot
\end{eqnarray*}
So by $(i)$ of Lemma \ref{epsilon} we get that
\begin{eqnarray*}
\frac{1}{\lambda-1}\sigma(t)- \frac{\overline{\varepsilon}}{3}\preceq \frac{1}{\lambda-1}\sigma(\lambda t)\preceq \frac{1}{\lambda-1}\sigma(t)+ \frac{\overline{\varepsilon}}{3}.
\end{eqnarray*}
Also, since $\lim\limits_{t\to \infty}\sigma(\lambda t)=L$, there exists $t_3\geq 0$ such that  $D(\sigma(\lambda t), L)\leq\frac{\varepsilon}{3}$ for $t> t_3$, meaning
\begin{eqnarray*}
L- \frac{\overline{\varepsilon}}{3}\preceq \sigma(\lambda t)\preceq L + \frac{\overline{\varepsilon}}{3}\cdot
\end{eqnarray*}
Then considering the equality (\ref{es1}) , there exists $t_4=\max\{t_1, t_2, t_3\}$ such that for $t> t_4$
\begin{eqnarray*}
s(t)-\frac{\overline{\varepsilon}}{3}+\frac{1}{\lambda-1}\sigma(t)\preceq L + \frac{\overline{\varepsilon}}{3}+\frac{1}{\lambda-1}\sigma(t)+ \frac{\overline{\varepsilon}}{3}\cdot
\end{eqnarray*}
So by $(v)$ of Lemma \ref{epsilon}, for $t> t_4$ we have
\begin{eqnarray}\label{epsilon1}
s(t)\preceq L + \overline{\varepsilon} .
\end{eqnarray}
On the other hand, if we consider the condition (\ref{**}),  equality (\ref{es2}), Lemma \ref{epsilon} and proceed in a similar way as that above, we get that there exists a $t_4^*\geq 0$ such that for $t> t_4^*$
\begin{eqnarray}\label{epsilon2}
s(t)\succeq L - \overline{\varepsilon}.
\end{eqnarray}
Then combining inequalities (\ref{epsilon1}) and (\ref{epsilon2}), we obtain
\begin{eqnarray*}
L - \overline{\varepsilon}\preceq s(t)\preceq L + \overline{\varepsilon}
\end{eqnarray*}
whenever $t>\max\{t_4,t_4^*\}$ and this completes the proof.
\end{proof}
\begin{definition}
A fuzzy-number-valued function $s(x)$ is said to be slowly decreasing if for every $\varepsilon>0$ there exist $t_0\geq 0$ and $\lambda >1$ such that
\begin{eqnarray*}
s(x)\succeq s(t)- \overline{\varepsilon}
\end{eqnarray*}
whenever $t_0<t<x\leq \lambda t.$
\end{definition}
\begin{remark}
Fuzzy-number-valued function $s(x)$ is slowly decreasing if and only if the family of real valued functions $\{s^-_{\alpha}(x)\mid \alpha\in[0,1]\}$ and $\{s^+_{\alpha}(x)\mid \alpha\in[0,1]\}$ are equi-slowly decreasing i.e.  $\forall\varepsilon>0$ there exist $t_0\geq 0$ and $\lambda>1$ such that for all $\alpha\in[0,1]$ $$s_{\alpha}^-(x)-s_{\alpha}^-(t)\geq -\varepsilon \quad \textrm{and}\quad s_{\alpha}^+(x)-s_{\alpha}^+(t)\geq -\varepsilon \qquad \textrm{whenever} \quad t_0< t<x\leq \lambda t.$$
\end{remark}
\begin{lemma}
If the fuzzy-number-valued  function $s(x)$ is slowly decreasing, then for every $\varepsilon>0$ there exist $t_0\geq0$ and  $0<\lambda<1$ such that for every $t>t_0$
\begin{eqnarray}
s(t)\succeq s(x)-\overline{\varepsilon} \qquad \textrm{whenever}\qquad  \lambda t< x\leq t.
\end{eqnarray}
\end{lemma}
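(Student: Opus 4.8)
The plan is to obtain the stated ``backward'' inequality directly from the definition of slow decrease by interchanging the roles of the two points. First I would restate the hypothesis in neutral variables: since $s$ is slowly decreasing, given $\varepsilon>0$ there exist $t_1\geq 0$ and $\mu>1$ such that
\begin{eqnarray*}
s(y)\succeq s(z)-\overline{\varepsilon}\qquad\text{whenever}\qquad t_1<z<y\leq\mu z.
\end{eqnarray*}
I would then set $\lambda:=1/\mu$, so that $0<\lambda<1$, and put $t_0:=\mu t_1$, which is nonnegative since $t_1\geq 0$ and $\mu>1$.

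Next I would fix $t>t_0$ and $x$ with $\lambda t<x\leq t$, and show the hypothesis applies with $z=x$ (the smaller point) and $y=t$ (the larger point). The boundary case $x=t$ is immediate, because $D(s(t),s(t))=0\leq\varepsilon$ forces $s(t)-\overline{\varepsilon}\preceq s(t)$ by part (i) of Lemma \ref{epsilon}, which is exactly $s(t)\succeq s(x)-\overline{\varepsilon}$. For $\lambda t<x<t$ I would verify the three requirements $t_1<x$, $x<t$, and $t\leq\mu x$ of the displayed condition: the middle inequality is given; from $x>\lambda t=t/\mu$ one gets $t<\mu x$, hence $t\leq\mu x$; and combining $x>t/\mu$ with $t>t_0=\mu t_1$ yields $x>t/\mu>t_1$. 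Applying slow decrease with these two points then produces precisely $s(t)\succeq s(x)-\overline{\varepsilon}$, as required.

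Since the argument is a pure substitution, I do not expect any genuine obstacle. The only points needing care are the bookkeeping at the endpoints --- namely checking that the strict inequality $x>\lambda t$ translates into both $t\leq\mu x$ and $x>t_1$ under the choice $t_0=\mu t_1$ --- and the separate, trivial treatment of $x=t$. No features of the fuzzy arithmetic beyond the order relation $\preceq$ and Lemma \ref{epsilon}(i) are invoked, so the verification reduces to elementary real-number inequalities on the parameters.
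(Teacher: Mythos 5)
Your proof is correct, but it takes a genuinely different --- and more elementary --- route than the paper. The paper proves this lemma by contradiction: it negates the conclusion, passes to $\alpha$-level functions (so that for some $\alpha_0$ one of the real inequalities $s^-_{\alpha_0}(t)<s^-_{\alpha_0}(x)-\varepsilon_0$ or $s^+_{\alpha_0}(t)<s^+_{\alpha_0}(x)-\varepsilon_0$ fails to be excluded), and then invokes M\'oricz's reformulated condition $\lim_{\lambda\to 1^-}\liminf_{t\to\infty}\min_{\lambda t\leq x\leq t}[f(t)-f(x)]\geq 0$ for real-valued slowly decreasing functions, concluding that one of the level functions is not slowly decreasing, contradicting the equi-slow-decrease characterization in the paper's Remark. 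Your argument instead stays entirely at the level of the order relation $\preceq$: you re-read the definition of slow decrease with the roles of the two points interchanged, choose $\lambda=1/\mu$ and $t_0=\mu t_1$ explicitly, verify the elementary inequalities $t_1<x<t\leq\mu x$, and dispatch the boundary case $x=t$ (which the definition's strict inequality excludes) via Lemma \ref{epsilon}(i). What your approach buys: it is direct, self-contained, produces explicit parameters, and avoids the delicate points in the paper's contradiction argument --- there the dependence of $\alpha_0$, $x$, $t$ on $\lambda$ and $t_0$ is not tracked, and the equivalence between the $\varepsilon$--$\lambda$ definition and the M\'oricz liminf condition is asserted rather than proved. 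What the paper's approach buys: it makes explicit the bridge between the fuzzy notion and the classical real-variable theory of slow decrease via level sets, which is the viewpoint the paper also uses in its Remark; but as a proof of this particular lemma, your substitution argument is the cleaner one.
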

\begin{proof}
The proof of the lemma is done by contradiction method. Assume that the fuzzy-number-valued function $s(x)$ is slowly decreasing and there exists $\varepsilon_0>0$ such that for all $0<\lambda<1$ and $t_0\geq0$ there exist real numbers $x$ and $t>t_0$ for which
\begin{eqnarray}
s(t)\nsucceq s(x)-\overline{\varepsilon}_0\qquad \textrm{whenever}\qquad  \lambda t< x\leq t.
\end{eqnarray}
Therefore, there exists $\alpha_{\scriptscriptstyle 0}\in [0,1]$ such that
\begin{eqnarray}\label{cases}
s^-_{\alpha_{ 0}}(t)<s^-_{\alpha_{ 0}}(x)-\varepsilon_0 \quad or\quad s^+_{\alpha_0}(t)<s^+_{\alpha_0}(x)-\varepsilon_0.
\end{eqnarray}
At this point we recall the reformulated condition of M$\acute{o}$ricz \cite{moric1} for a slowly decreasing real valued function $f$ such that
\begin{eqnarray}\label{soldan}
\lim_{\lambda\rightarrow 1^{-}}\liminf_{t\rightarrow\infty}\min_{\lambda t\leq x\leq t}[f(t)-f(x)]\geq0.
\end{eqnarray}
No matter which case we choose in (\ref{cases}), one of the real valued functions $s^-_{\alpha_{ 0}}(t)$ and $s^+_{\alpha_0}(t)$ does not satisfy the condition (\ref{soldan}). So at least one of them is not slowly decreasing which contradicts the hypothesis that fuzzy-number-valued function $s(x)$ is slowly decreasing.
\end{proof}

It is clear that if function $s$ is slowly decreasing then conditions (\ref{*}) and (\ref{**}) are satisfied by $(i)$ and $(v)$ of Theorem \ref{bessart}.  So next corollary immediately follows:
\begin{corollary}\label{maintheorem2}
If $f$ is a continuous fuzzy-number-valued function such that integral (\ref{integral}) is Ces\`{a}ro summable to a fuzzy number $L$ and its integral function $s(t)$ is slowly decreasing, then the integral (\ref{integral}) converges to $L$.
\end{corollary}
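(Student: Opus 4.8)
The plan is to verify that the slow-decrease hypothesis forces the two Tauberian conditions (\ref{*}) and (\ref{**}) of Theorem \ref{maintheorem}; once both hold, the conclusion is immediate from that theorem, since by assumption the integral (\ref{integral}) is already Ces\`{a}ro summable to $L$. So the entire task reduces to turning the order-estimate defining slow decrease into integrated order-estimates of the required shape.

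First I would establish (\ref{*}). Fix $\varepsilon>0$. By the definition of slow decrease there are $t_0\geq 0$ and $\lambda>1$ with $s(x)\succeq s(t)-\overline{\varepsilon}$ whenever $t_0<t<x\leq\lambda t$ (the endpoint $x=t$ holding trivially), so for each fixed $t>t_0$ the integrand satisfies $s(x)\succeq s(t)-\overline{\varepsilon}$ throughout $[t,\lambda t]$. Integrating this pointwise order relation over $[t,\lambda t]$ and invoking the order-monotonicity of the fuzzy Riemann integral, part $(v)$ of Theorem \ref{bessart}, gives
$$\int_t^{\lambda t}s(x)\,dx \succeq \int_t^{\lambda t}\bigl(s(t)-\overline{\varepsilon}\bigr)\,dx=(\lambda t-t)\bigl(s(t)-\overline{\varepsilon}\bigr),$$
the last equality being the integral of a constant fuzzy number, which I would justify through the parametric representation of Theorem \ref{fint} (its $\alpha$-level endpoints are constants and integrate to $(\lambda t-t)$ times themselves). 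Multiplying both sides by the positive scalar $1/(\lambda t-t)$, which preserves $\succeq$ because scalar multiplication acts level-wise by $[ku]_\alpha=k[u]_\alpha$, yields exactly (\ref{*}).

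Next I would obtain (\ref{**}) in the same spirit, but feeding in the preceding lemma rather than the bare definition: it converts slow decrease into the left-hand estimate that, for the given $\varepsilon$, there are $t_0\geq 0$ and $0<\ell<1$ with $s(t)\succeq s(x)-\overline{\varepsilon}$, equivalently $s(x)\preceq s(t)+\overline{\varepsilon}$, for all $\ell t<x\leq t$ and $t>t_0$. Integrating over $[\ell t,t]$ and again applying $(v)$ of Theorem \ref{bessart} produces
$$\int_{\ell t}^{t}s(x)\,dx \preceq (t-\ell t)\bigl(s(t)+\overline{\varepsilon}\bigr),$$
and division by the positive scalar $1/(t-\ell t)$ gives (\ref{**}). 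With both (\ref{*}) and (\ref{**}) verified, Theorem \ref{maintheorem} immediately delivers convergence of (\ref{integral}) to $L$.

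I do not expect a genuine conceptual obstacle here; the only delicate points are arithmetic bookkeeping in $E^1$. One must read $s(t)-\overline{\varepsilon}$ as the translate of a fuzzy number by a crisp constant (legitimate, with $\alpha$-levels $[s^-_\alpha(t)-\varepsilon,\,s^+_\alpha(t)-\varepsilon]$, and not as a genuine inverse, since by Lemma \ref{added}\,$(ii)$ most fuzzy numbers have none), and one must confirm both that integration of the constant factor obeys positive-coefficient distributivity (Lemma \ref{added}\,$(iii)$) and that multiplying an order relation by a positive scalar preserves it. All of these are guaranteed by Lemma \ref{added} and the level-wise definitions, so the argument is the direct monotonicity passage displayed above.
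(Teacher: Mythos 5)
Your proposal is correct and follows exactly the paper's route: the paper also deduces (\ref{*}) directly from the definition of slow decrease and (\ref{**}) from the preceding lemma, integrating the pointwise order relations via part $(v)$ of Theorem \ref{bessart} (together with linearity) and then invoking Theorem \ref{maintheorem}. You have merely written out the level-wise bookkeeping that the paper compresses into the remark that the conditions are ``clearly'' satisfied.
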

\begin{theorem}\label{Landau}
Let $f$ be a continuous fuzzy-number-valued function on $[0,\infty)$. If there exist negative constant fuzzy number $u$ and a real number $x_0\geq 0$ such that
\begin{eqnarray}\label{one}
xf(x)\succeq u \quad \textrm{for}\quad x>x_0,
\end{eqnarray}
then fuzzy-number-valued function $s(t)=\int\limits_{0}^{t}f(x)dx$ is slowly decreasing.
\end{theorem}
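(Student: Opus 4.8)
The plan is to transfer the classical Landau argument---that a pointwise bound $f(x)\geq -C/x$ forces $s(t)=\int_0^t f$ to be slowly decreasing---into the fuzzy setting, organizing everything around the order-compatibility of the fuzzy integral rather than around subtraction of fuzzy numbers, which is unavailable by Lemma \ref{added}(ii). First I would rescale the hypothesis: applying the scalar $1/x>0$ and using the associativity $\tfrac1x(xf(x))=f(x)$ from Lemma \ref{added}(v), together with the fact that multiplication by a positive scalar preserves the level-wise order $\preceq$, the bound $xf(x)\succeq u$ for $x>x_0$ becomes $f(x)\succeq\tfrac1x u$ for every $x>x_0$.

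Next, I fix $\varepsilon>0$ and take any $t_0\geq x_0$. For $t_0<t<x\leq\lambda t$ I would avoid forming the (nonexistent) difference $s(x)-s(t)$ and instead invoke additivity, Theorem \ref{bessart}(ii), to write $s(x)=s(t)+\int_t^x f(r)\,dr$. Monotonicity of the integral, Theorem \ref{bessart}(v), applied to $f(r)\succeq\tfrac1r u$ then gives $\int_t^x f(r)\,dr\succeq\int_t^x\tfrac1r u\,dr$. The integrand $r\mapsto\tfrac1r u$ is continuous, since $D(\tfrac1r u,\tfrac1{r'}u)=\bigl|\tfrac1r-\tfrac1{r'}\bigr|\,D(u,\overline0)$ by Proposition \ref{p02}(ii), so Theorem \ref{fint} evaluates its integral level-wise and produces the clean identity $\int_t^x\tfrac1r u\,dr=\ln(x/t)\,u$, whose $\alpha$-levels are $\bigl[u^-_\alpha\ln(x/t),\,u^+_\alpha\ln(x/t)\bigr]$.

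The remaining point is to make $\ln(x/t)\,u$ dominate $-\overline\varepsilon$ uniformly. Because $u$ is negative its endpoints satisfy $u^-_\alpha,u^+_\alpha\leq0$, so multiplying the negative fuzzy number $u$ by the smaller nonnegative scalar $\ln(x/t)\leq\ln\lambda$ reverses the order level-wise and yields $\ln(x/t)\,u\succeq\ln\lambda\,u$. I then choose $\lambda>1$ close enough to $1$ that $\ln\lambda\,D(u,\overline0)\leq\varepsilon$, which is possible since $\ln\lambda\to0$ as $\lambda\to1^+$; by Proposition \ref{p02}(ii) this gives $D(\ln\lambda\,u,\overline0)=\ln\lambda\,D(u,\overline0)\leq\varepsilon$, and Lemma \ref{epsilon}(i) converts it into $-\overline\varepsilon\preceq\ln\lambda\,u$. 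Chaining $\int_t^x f(r)\,dr\succeq\ln(x/t)\,u\succeq\ln\lambda\,u\succeq-\overline\varepsilon$ through Lemma \ref{epsilon}(iii), and then adding $s(t)$ to both sides of $\int_t^x f(r)\,dr\succeq-\overline\varepsilon$ via Lemma \ref{epsilon}(iv), I obtain $s(x)\succeq s(t)-\overline\varepsilon$ for all $t_0<t<x\leq\lambda t$, which is exactly the defining property of slow decrease.

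The main obstacle is precisely the lack of additive inverses: one cannot estimate $s(x)-s(t)$ directly, so the whole proof must be phrased through the identity $s(x)=s(t)+\int_t^x f$ and the cancellation/monotonicity rules of Lemma \ref{epsilon} instead of ordinary subtraction. A secondary subtlety is the uniformity in $\alpha$, which I control through the single scalar $D(u,\overline0)$ (equivalently, through the finite left endpoint $u^-_0$ of the compact support of the negative fuzzy number $u$); alternatively, one could carry out the same estimate entirely at the level of $s^-_\alpha$ and $s^+_\alpha$ and conclude via the equi-slow-decrease characterization recorded in the Remark following the definition of slow decrease.
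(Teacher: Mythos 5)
Your proof is correct, and it reaches the paper's estimate by a noticeably different route in execution, even though the underlying Landau-type computation is the same. The paper descends immediately to the endpoint functions: from $xf(x)\succeq u$ it extracts $f_\alpha^\pm(x)\geq -H/x$ with $H=-u_0^-$, runs the classical computation $s_\alpha^\pm(x)-s_\alpha^\pm(t)=\int_t^x f_\alpha^\pm(r)\,dr\geq -H\ln(x/t)\geq -H\ln\lambda$ uniformly in $\alpha$ (real-valued subtraction being available at that level), and takes $\lambda=e^{\varepsilon/H}$. You instead stay inside $E^1$ throughout: you rescale the hypothesis to $f(x)\succeq\frac{1}{x}u$, decompose $s(x)=s(t)+\int_t^x f(r)\,dr$ by Theorem \ref{bessart}(ii), compare with $\int_t^x \frac{1}{r}u\,dr=\ln(x/t)\,u$ via Theorem \ref{bessart}(v) and Theorem \ref{fint}, and convert metric smallness of $\ln\lambda\,u$ into the order bound $-\overline{\varepsilon}\preceq\ln\lambda\,u$ by Lemma \ref{epsilon}(i), finishing with Lemma \ref{epsilon}(iii)--(iv). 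Since $D(u,\overline{0})=-u_0^-=H$ for a negative fuzzy number $u$, your threshold $\ln\lambda\leq\varepsilon/H$ is exactly the paper's choice. What your version buys is a coordinate-free argument in which uniformity in $\alpha$ is automatic and the lack of additive inverses is circumvented structurally rather than by passing to levels; what the paper's version buys is brevity, needing only the parametric representation and classical real analysis, with no appeal to the monotonicity of the fuzzy integral or to Lemma \ref{epsilon}. Two attributions in your write-up are slightly loose but harmless: that positive scalar multiplication preserves $\preceq$, and that $D(au,bu)=|a-b|\,D(u,\overline{0})$ for $a,b>0$, are not literally among the quoted statements (the latter follows from Lemma \ref{added}(iii) together with Proposition \ref{p02}(ii)--(iii)); both are immediate level-wise, so nothing essential is missing.
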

\begin{proof}
Let $ xf(x)\succeq u$ be satisfied under the given conditions on $u$ and $x_0$ in the theorem. Then for $x>x_0$ we have
\begin{eqnarray*}
xf_{\alpha}^-(x)\geq u^-_{\alpha}\geq u^-_{0}\qquad ,\qquad xf_{\alpha}^+(x)\geq u^+_{\alpha}\geq u^+_{1}\geq u^-_{0}.
\end{eqnarray*}
For the sake of simplicity let take $u^-_{0}=-H$ where $H> 0$. Then
\begin{eqnarray*}
xf_{\alpha}^-(x)\geq -H \Rightarrow f_{\alpha}^-(x)\geq -\frac{H}{x}\quad ,\quad
xf_{\alpha}^+(x)\geq -H \Rightarrow f_{\alpha}^+(x)\geq -\frac{H}{x}
\end{eqnarray*}
are satisfied. Then for $x_0< t<x\leq \lambda t$ when $\lambda>1$, we have
\begin{eqnarray*}
s_{\alpha}^-(x)-s_{\alpha}^-(t)=\int_t^xf_{\alpha}^-(u)du\geq -H\int_t^x\frac{du}{u}=-H\ln\frac{x}{t}\geq -H\ln \lambda
\end{eqnarray*}
and
\begin{eqnarray*}
s_{\alpha}^+(x)-s_{\alpha}^+(t)=\int_t^xf_{\alpha}^+(u)du\geq -H\int_t^x\frac{du}{u}=-H\ln\frac{x}{t}\geq -H\ln \lambda.
\end{eqnarray*}
Choosing $\lambda =e^{\varepsilon/H}$, we get the inequalities
\begin{eqnarray*}
s_{\alpha}^-(x)\geq s_{\alpha}^-(t)-\varepsilon\qquad ,\qquad
s_{\alpha}^+(x)\geq s_{\alpha}^+(t)-\varepsilon
\end{eqnarray*}
and then $s(x)\succeq s(t)-\overline{\varepsilon}$ holds whenever $x_0< t<x\leq \lambda t$.
\end{proof}
\begin{example}
Let the fuzzy-number-valued function $f :[0,\infty)\to E^1$ be given as
\begin{eqnarray*}
(f(x))(t)=
\begin{cases}
\quad\frac{t}{2-\sin x}\quad, &\quad \textrm{if} \quad   0\leq t\leq 2-\sin x , \\[5pt]
2-\frac{t}{2-\sin x}\ , & \quad  \textrm{if} \quad  2-\sin x\leq t\leq 2(2-\sin x).
\end{cases}
\end{eqnarray*}
Then
\begin{eqnarray*}
f_{\alpha}^-(x)=(2-\sin x)\alpha \qquad \qquad , \qquad \quad f_{\alpha}^+(x)=(2-\sin x)(2-\alpha).
\end{eqnarray*}
Since $f_{\alpha}^{\pm}(x)\geq 0$ holds for each $\alpha \in [0,1]$ and $x>0$, we have
\begin{eqnarray*}
xf_{\alpha}^-(x)\geq 0 \qquad , \qquad xf_{\alpha}^+(x)\geq 0 \quad
\end{eqnarray*}
which means that $xf(x)\succeq \overline{0}$. So $s(t)$ is slowly decreasing.
\end{example}
As a result of Theorem \ref{Landau} the following one-sided Tauberian condition is obtained.
\begin{corollary}
 If $f$ is a continuous fuzzy-number-valued function  such that integral (\ref{integral}) is Ces\`{a}ro summable to a fuzzy number L and condition (\ref{one}) is satisfied, then the integral (\ref{integral}) converges to $L$.
\end{corollary}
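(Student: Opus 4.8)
The plan is to recognize that this corollary is simply the composition of the two immediately preceding results, so no new analysis is required. First I would invoke Theorem~\ref{Landau}. Its hypotheses are that $f$ is continuous on $[0,\infty)$ and that condition (\ref{one}) holds, i.e.\ there exist a negative constant fuzzy number $u$ and a real number $x_0\geq 0$ with $xf(x)\succeq u$ for $x>x_0$. These are exactly the standing assumptions of the corollary, so Theorem~\ref{Landau} applies verbatim and yields that the integral function $s(t)=\int_0^t f(x)\,dx$ is slowly decreasing.

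Second, with slow decrease of $s(t)$ now established, I would feed this into Corollary~\ref{maintheorem2}. That corollary requires precisely three things: that $f$ be a continuous fuzzy-number-valued function, that integral (\ref{integral}) be Ces\`{a}ro summable to a fuzzy number $L$, and that its integral function $s(t)$ be slowly decreasing. The first two are given by hypothesis and the third was just obtained from Theorem~\ref{Landau}. Its conclusion, that integral (\ref{integral}) converges to $L$, is exactly the assertion to be proved.

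I expect essentially no obstacle here, since the real work has already been distributed across the earlier statements: Theorem~\ref{Landau} shows that the Landau-type one-sided bound (\ref{one}) forces slow decrease, and Corollary~\ref{maintheorem2} records that slow decrease is a Tauberian condition for Ces\`{a}ro summability. The only point worth verifying is that the hypotheses line up exactly between the three statements, which they do, so the proof consists of citing Theorem~\ref{Landau} followed by Corollary~\ref{maintheorem2}.
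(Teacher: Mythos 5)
Your proof is correct and is exactly the argument the paper intends: the corollary is stated immediately after Theorem~\ref{Landau} as its consequence, obtained by combining that theorem (condition (\ref{one}) implies $s(t)$ is slowly decreasing) with Corollary~\ref{maintheorem2} (slow decrease is a Tauberian condition for Ces\`{a}ro summability). The hypotheses do line up verbatim, so nothing further is needed.
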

We note that one may extend Ces\`{a}ro summability method to continuous fuzzy-number-valued functions and give analogs of Theorem \ref{regular}--\ref{maintheorem}, Corollary \ref{maintheorem2} for Ces\`{a}ro summability of fuzzy-number-valued functions. The proofs are done identically by replacing integral function $s$ with general continuous fuzzy-number-valued function in corresponding proofs and hence omitted.
\begin{definition}
A continuous fuzzy-number-valued function $f:[0,\infty)\rightarrow E^1$ is said to be Ces\`{a}ro summable to a fuzzy number $L$ if
\begin{eqnarray*}
\lim_{t\to\infty}\frac{1}{t}\int_0^{t}f(x)dx=L.
\end{eqnarray*}
\end{definition}
\begin{theorem}
Let  $f$ be a continuous fuzzy-number-valued function. If \ $\lim_{t\to\infty}f(t)=L$, then $f$ is Ces\`{a}ro summable to fuzzy number $L$.
\end{theorem}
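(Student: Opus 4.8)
The plan is to mirror the proof of Theorem \ref{regular}, replacing the integral function $s$ by $f$ itself, exactly as the preceding remark indicates. First I would fix $\varepsilon>0$ and invoke the hypothesis $\lim_{t\to\infty}f(t)=L$ to obtain $t_0>0$ with $D(f(t),L)<\frac{\varepsilon}{2}$ for all $t\geq t_0$. To control the initial segment I would apply Theorem \ref{bessart}(iii) with the constant function $g\equiv L$: this makes $F(x)=D(f(x),L)$ a continuous real-valued function on the compact interval $[0,t_0]$, hence bounded, so there is $M>0$ with $D(f(x),L)<M$ for $x<t_0$.

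Next I would rewrite the Ces\`{a}ro mean so that the target $L$ appears under the integral sign. Since $\int_0^t L\,dx=tL$ and $\frac{1}{t}(tL)=L$ by Lemma \ref{added}(v), one has $L=\frac{1}{t}\int_0^t L\,dx$, and therefore, using Proposition \ref{p02}(ii) with $k=\frac{1}{t}>0$,
$$D\!\left(\frac{1}{t}\int_0^t f(x)\,dx,\,L\right)=\frac{1}{t}\,D\!\left(\int_0^t f(x)\,dx,\int_0^t L\,dx\right).$$
The crucial estimate is then Theorem \ref{bessart}(iii), which bounds the right-hand side above by $\frac{1}{t}\int_0^t D(f(x),L)\,dx$.

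Finally I would split this integral at $t_0$ and apply the two bounds obtained above, giving
$$\frac{1}{t}\int_0^{t_0}D(f(x),L)\,dx+\frac{1}{t}\int_{t_0}^{t}D(f(x),L)\,dx\leq\frac{t_0 M}{t}+\frac{\varepsilon}{2}\cdot\frac{t-t_0}{t}<\frac{t_0 M}{t}+\frac{\varepsilon}{2}.$$
Since $\frac{t_0 M}{t}\to 0$ as $t\to\infty$, there is a threshold beyond which the first term is less than $\frac{\varepsilon}{2}$; taking the maximum of this threshold and $t_0$ yields $D\!\left(\frac{1}{t}\int_0^t f(x)\,dx,L\right)<\varepsilon$ for all sufficiently large $t$, which is the claim.

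I do not expect a genuine obstacle, since the argument is structurally identical to that of Theorem \ref{regular}. The only points requiring a moment's care are, first, the boundedness of $D(f(x),L)$ on $[0,t_0]$, which is precisely where continuity of $f$ enters through the continuity of $F(x)=D(f(x),L)$ supplied by Theorem \ref{bessart}(iii); and second, the identity $\frac{1}{t}\int_0^t L\,dx=L$, which must be justified via the scalar-multiplication rules of Lemma \ref{added} rather than by any subtraction in $E^1$, as fuzzy numbers generally lack additive inverses.
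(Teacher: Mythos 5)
Your proposal is correct and is exactly the argument the paper intends: the paper explicitly omits this proof, stating it is obtained from the proof of Theorem \ref{regular} by replacing the integral function $s$ with the continuous function $f$, which is precisely what you have done. Your two points of extra care (boundedness of $D(f(x),L)$ on $[0,t_0]$ via Theorem \ref{bessart}(iii), and the identity $L=\frac{1}{t}\int_0^t L\,dx$ via Lemma \ref{added}(v)) are sound and in fact make the argument slightly more explicit than the original.
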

\begin{theorem}
If a continuous fuzzy-number-valued function $f$ is Ces\`{a}ro summable to a fuzzy number $L$, then $\lim_{t\to\infty}f(t)=L$ if and only if for every $\varepsilon>0$ there exist $t_0\geq 0$ and $\lambda>1$ such that for $t> t_0$
\begin{eqnarray*}
\frac{1}{\lambda t-t}\int_t^{\lambda t}f(x)dx \succeq f(t)-\overline{\varepsilon}
\end{eqnarray*}
and another $0<\ell<1$ such that
\begin{eqnarray*}
\frac{1}{t-\ell t}\int_{\ell t}^{ t}f(x)dx \preceq f(t)+\overline{\varepsilon}.
\end{eqnarray*}
\begin{theorem}
If a continuous fuzzy-number-valued function $f$ is Ces\`{a}ro summable to a fuzzy number $L$ and $f$ is slowly decreasing, then $\lim_{t\to\infty}f(t)=L$.
\end{theorem}

\end{theorem}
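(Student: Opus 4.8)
The plan is to run the argument of Corollary \ref{maintheorem2} verbatim, with the integral function $s$ replaced throughout by $f$ and the mean $\sigma$ replaced by the Ces\`{a}ro mean $\tau(t):=\frac{1}{t}\int_0^t f(x)\,dx$, which by hypothesis satisfies $\lim_{t\to\infty}\tau(t)=L$. First I would record the identities of Lemma \ref{extra} in this notation. Since those identities rest only on the additivity of the integral (Theorem \ref{bessart}(ii)) together with the scalar laws of Lemma \ref{added}, for every $\lambda>1$ one gets
\[
\frac{1}{\lambda t-t}\int_t^{\lambda t}f(x)\,dx+\frac{1}{\lambda-1}\tau(t)=\tau(\lambda t)+\frac{1}{\lambda-1}\tau(\lambda t),
\]
and the mirror identity on $[\ell t,t]$ for $0<\ell<1$. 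Every coefficient appearing here is positive, so the distributive law is available through Lemma \ref{added}(iii) and no fuzzy subtraction is ever required; this is exactly what keeps the identities valid in $E^1$.

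Next, from $\tau\to L$ together with these identities I would deduce the analog of Lemma \ref{lemma}, namely $\frac{1}{\lambda t-t}\int_t^{\lambda t}f(x)\,dx\to L$ and $\frac{1}{t-\ell t}\int_{\ell t}^{t}f(x)\,dx\to L$, and then the analog of the Tauberian Theorem \ref{maintheorem}: if the one-sided conditions
\[
\frac{1}{\lambda t-t}\int_t^{\lambda t}f(x)\,dx\succeq f(t)-\overline{\varepsilon},\qquad \frac{1}{t-\ell t}\int_{\ell t}^{t}f(x)\,dx\preceq f(t)+\overline{\varepsilon}
\]
hold eventually, then $f(t)\to L$. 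The proof is the three-term $\varepsilon$ sandwich used for Theorem \ref{maintheorem}: combine the one-sided bound with $D\!\left(\tfrac{1}{\lambda-1}\tau(t),\tfrac{1}{\lambda-1}\tau(\lambda t)\right)\to0$ and $\tau(\lambda t)\to L$, substitute the displayed identity, and cancel the common summand $\frac{1}{\lambda-1}\tau(t)$ by Lemma \ref{epsilon}(v) to reach $f(t)\preceq L+\overline{\varepsilon}$; the symmetric computation via the second identity gives $f(t)\succeq L-\overline{\varepsilon}$.

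It then remains to see that slow decrease of $f$ supplies the two one-sided conditions, exactly as in the passage preceding Corollary \ref{maintheorem2}. Slow decrease gives $f(x)\succeq f(t)-\overline{\varepsilon}$ for $t_0<t<x\le\lambda t$; integrating this inequality over $x\in[t,\lambda t]$ and dividing by the positive length $\lambda t-t$ yields the first condition by parts (v) and (i) of Theorem \ref{bessart}. For the second, I would invoke the (unlabelled) lemma just before Corollary \ref{maintheorem2}, which converts slow decrease into $f(t)\succeq f(x)-\overline{\varepsilon}$ for $\ell t<x\le t$ with a suitable $0<\ell<1$; integrating over $[\ell t,t]$ then gives $\frac{1}{t-\ell t}\int_{\ell t}^{t}f(x)\,dx\preceq f(t)+\overline{\varepsilon}$. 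Feeding both into the Tauberian analog of the previous paragraph yields $L-\overline{\varepsilon}\preceq f(t)\preceq L+\overline{\varepsilon}$ for large $t$, hence $f(t)\to L$.

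I do not expect any new analytic difficulty beyond Corollary \ref{maintheorem2}; the one point demanding care is the fuzzy arithmetic. Because $E^1$ has no additive inverses for non-crisp elements (Lemma \ref{added}(ii)), every manipulation must be kept in additive form, each ``cancellation'' must be routed through the order-cancellation law Lemma \ref{epsilon}(v) rather than through genuine subtraction, and each scalar factor ($\frac{1}{\lambda-1}$, $\frac{\lambda}{\lambda-1}$, $\frac{\ell}{1-\ell}$) must be checked to be nonnegative so that Lemma \ref{added}(iii) applies. This bookkeeping, rather than any estimate, is the only real obstacle.
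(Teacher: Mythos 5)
Your proposal is correct and is exactly what the paper intends: the paper omits these proofs, stating they are ``done identically by replacing integral function $s$ with general continuous fuzzy-number-valued function in corresponding proofs,'' which is precisely your substitution of $f$ for $s$ and $\tau$ for $\sigma$ throughout Lemma \ref{extra}, Lemma \ref{lemma}, Theorem \ref{maintheorem}, and Corollary \ref{maintheorem2}. Your added care about positive scalars, the order-cancellation law (Lemma \ref{epsilon}(v)), and the absence of additive inverses matches the bookkeeping already present in the paper's earlier proofs.
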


\end{document}